\numberwithin{equation}{section}
\numberwithin{equation}{section}
\theoremstyle{plain}
\newtheorem{theorem}[equation]{Theorem}
\newtheorem{thm}[equation]{Theorem}
\newtheorem{conjecture}[equation]{Conjecture}
\newtheorem{lemma}[equation]{Lemma}
\newtheorem{corollary}[equation]{Corollary}
\newtheorem{prop}[equation]{Proposition}
\theoremstyle{definition}
\newtheorem{nonsec}[equation]{}
\theoremstyle{remark}
\newcommand{\R}{\mathbb{R}}
\newcommand{\C}{\mathbb{C}}
\newcommand{\B}{\mathbb{B}}
\newcounter{alphabet}
\newcounter{minutes}\setcounter{minutes}{\time}
\newcounter{hours}\setcounter{hours}{\time}
\begin{document}

\bibliographystyle{amsplain}
\title[Lipschitz constants and quadruple symmetrization]
{
Lipschitz constants and quadruple symmetrization by M\"obius
transformations
}

\author[O. Rainio]{Oona Rainio}
\author[M. Vuorinen]{Matti Vuorinen}

\keywords{Hyperbolic geometry, Lipschitz constants, M\"obius transformations, symmetrization}
\subjclass[2010]{Primary 51M10, Secondary 51M16}
\begin{abstract}
Due to the invariance properties of cross-ratio, M\"obius transformations are often used to map a set of points or other geometric object into a symmetric position to simplify a problem studied. However, when the points are mapped under a M\"obius transformation, the distortion of the Euclidean geometry is rarely considered. Here, we identify several cases where the distortion caused by this symmetrization can be measured in terms of the Lipschitz constant of the M\"obius transformation in the Euclidean or the chordal metric.
\end{abstract}
\maketitle

\section{Introduction}

Symmetry is an important property considered by many mathematicans working in various areas of research, including classical analysis, geometry, and solutions to extremal problems \cite{bae,ban,ber,du}. Problem-specific methods have been introduced with the purpose of transforming objects into symmetric ones \cite{bae}. Another use of symmetry is to reduce some complicated problem to its special case which exhibits symmetry and is therefore easier to solve than the original question.

For quadruples of points on the complex plane, a symmetrization can be performed by means of a M\"obius transformation. The use of M\"obius transformations is in fact a standard tool in classical function theory. However, while there are several metrics, such as the hyperbolic metric, that are invariant with respect to the M\"obius transformations, these mappings affect the Euclidean distances between the complex points considered. Our aim is to analyze "cost" of the symmetrization due to distortion under M\"obius transformations, measured in terms of Lipschitz constants. These Lipschitz constants can then be formulated with respect to Euclidean, hyperbolic, or chordal geometry.

We consider symmetrization of quadruples of points in several cases, under various constraints of the location of the points of the quadruple $\{a,b,c,d\}.$ In the most general case, we find a M\"obius transformation $h$ mapping the quadruple onto the quadruple $\{-1,-y,y,1\}$ symmetric with respect to the origin. For any such M\"obius
transformation $h$, it seems natural to call $h^{-1}(0)$ {\it a M\"obius center of the quadruple}. We give an explicit formula for such a M\"obius center.

\begin{theorem}\label{thm_mobcenter}
Let $h$ be the M\"obius transformation of $\overline{\R}^2$ that maps distinct points $a,b,c,d\in\R^2$ onto $-1,y,-y,1$, respectively. If $a,b,c,d$ are collinear and $|a-b|=|c-d|$, then $h^{-1}(0)=(a+d)/2$. Otherwise,
\begin{align*}
h^{-1}(0)=\frac{(b-c)(a+d)y+(b+c)(a-d)}{2((b-c)y+a-d)},    
\end{align*}
where $y$ is as in Lemma \ref{lem_symth}.
\end{theorem}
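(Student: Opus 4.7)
The strategy is to parametrise the inverse $\sigma := h^{-1}$ as a general M\"obius map $\sigma(z) = (\alpha z + \beta)/(\gamma z + \delta)$ and to exploit the antipodal structure of the target quadruple $\{-1,-y,y,1\}$: both pairs $\{\pm 1\}$ and $\{\pm y\}$ are symmetric about $0$, so the four conditions on $\sigma$ split naturally into sums and differences, which will reduce the amount of linear algebra dramatically.

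First I would impose $\sigma(-1)=a$ and $\sigma(1)=d$. Adding and subtracting the two resulting linear relations $-\alpha + \beta = a(\delta - \gamma)$ and $\alpha + \beta = d(\gamma + \delta)$ yields
\begin{align*}
2\alpha &= (a+d)\gamma + (d-a)\delta, & 2\beta &= (d-a)\gamma + (a+d)\delta.
\end{align*}
From this one reads off
\begin{equation*}
h^{-1}(0) \;=\; \frac{\beta}{\delta} \;=\; \frac{a+d}{2} \;-\; \frac{a-d}{2}\cdot \frac{\gamma}{\delta},
\end{equation*}
so the whole problem is reduced to determining the single scalar $t := \gamma/\delta$.

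To pin $t$ down I would substitute the expressions just obtained for $\alpha,\beta$ into the remaining pair $\sigma(y)=b$ and $\sigma(-y)=c$, and again add (or subtract) the two resulting equations. This produces the single linear relation $\gamma\bigl[(a-d)+(b-c)y\bigr] = \delta\bigl[(a+d)-(b+c)\bigr]$, so that $t$ is the quotient of the two bracketed expressions. Plugging this back into the formula above and clearing denominators gives, after cancellation of the $(a+d)(a-d)$ cross term, the announced identity
\begin{equation*}
h^{-1}(0) = \frac{(b-c)(a+d)y + (b+c)(a-d)}{2\bigl((b-c)y + a-d\bigr)}.
\end{equation*}

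For the collinear case with $|a-b|=|c-d|$, the ordering of the four points on their common line forces the midpoints of $\{a,d\}$ and $\{b,c\}$ to coincide, i.e., $a+d=b+c$. This is exactly the condition making the numerator of $t$ vanish, so $t=0$ and $\sigma$ is affine; then $\sigma(0)=\beta/\delta=(a+d)/2$ directly. The only mildly delicate point, and the one requiring actual care, is verifying that the denominator $(b-c)y+(a-d)$ does not vanish in the generic case; this should follow from the distinctness of $a,b,c,d$ combined with the characterization of $y$ furnished by Lemma \ref{lem_symth}.
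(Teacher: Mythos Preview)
Your argument is correct, and it reaches the formula by a genuinely different route than the paper. The paper's proof is a one-line corollary of Lemma~\ref{lem_symth}: since that lemma has already written $h(z)=(pz+q)/(z+s)$ with explicit $p$ and $q$, one simply solves $pw+q=0$ to get $h^{-1}(0)=-q/p$, and this ratio is exactly the displayed expression. All the work is hidden in the lemma.

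Your approach instead parametrises the \emph{inverse} map $\sigma=h^{-1}$ and exploits the antipodal symmetry of $\{-1,1\}$ and $\{-y,y\}$ to reduce the four interpolation constraints to a single unknown ratio $t=\gamma/\delta$. This is self-contained: you never need the explicit formulas for $p,q,s$ from Lemma~\ref{lem_symth}, nor the quadratic for $y$, to obtain $h^{-1}(0)$ in terms of $a,b,c,d,y$. The trade-off is that the paper gets the result essentially for free once the lemma is in place, whereas you redo a portion of that linear algebra---but in a more streamlined way tailored to the quantity $\sigma(0)$ rather than to the full map $h$.

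One small remark on the degenerate case: your claim that collinearity together with $|a-b|=|c-d|$ forces $a+d=b+c$ relies on an implicit ordering of the four points along the line (so that $a-b$ and $c-d$ point the same way). The paper is equally informal on this point, and the real dichotomy in both arguments is ``$h$ is affine'' versus ``$h$ is not affine''; your identification $a+d=b+c \Leftrightarrow t=0 \Leftrightarrow \sigma$ affine is the clean way to phrase it.
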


The structure of this paper is as follows. In Section 3, we study a certain chordal isometry and related Lipschitz constants of the chordal metric. In Section 4, we find a M\"obius transformation for the symmetrization of any four distinct points on the complex plane and prove Theorem \ref{thm_mobcenter}. In Section 5, we introduce three normalization methods for a pair of points in the unit disk and, in Section 6, we consider the symmetrization of quadruples of points on the unit circle.

\section{Preliminaries}

Let us first introduce the notations used in this paper. For a dimension $n\geq0$, $e_n$ is the $n$th unit vector of the real space $\R^n$, $\B^n$ is the unit ball, and $S^{n-1}$ is the unit sphere. For a point $x\in\R^n$ and a radius $r$, let $B^n(x,r)=\{y\in\R^n\,:\;|x-y|<r\}$ and $S^{n-1}(x,r)=\{y\in\R^n\,:\;|x-y|=r\}$. For any point $x\in\C$, denote its complex conjugate by $\overline{x}$. For two distinct points $x,y\in\R^n$, let $L(x,y)$ be the Euclidean line passing through them. Denote the extended real space by $\overline{\R}^n=\R^n\cup\{\infty\}$.

The intersection point of two non-parallel lines $L(a,b)$ and $L(c,d)$ is given by \cite[Ex. 4.3(1), p.\,57 \& p.\,373]{hkv}
\begin{equation}\label{eq:lis}
{\rm LIS}[a,b,c,d]=
   \frac{(\overline{a}b-a\overline{b})(c-d)
          -(\overline{c}d-c\overline{d})(a-b)}
        {(\overline{a}-\overline{b})(c-d)
          -(\overline{c}-\overline{d})(a-b)}.
\end{equation}

If the four points $a,b,c,d$ are on the unit circle, then we have $\overline{z}=1/z$ for $z\in\{a,b,c,d\}$ and the formula \eqref{eq:lis} can be simplified as in the following proposition.

\begin{prop}\cite[p.13]{p01}\label{prop_conjIntF}
If $a,b,c,d$ are four distinct complex points on the unit circle $S^1$ chosen so that the lines $L(a,b)$ and $L(c,d)$ are non-parallel, then the complex conjugate $\overline{f}$ of the intersection point $f$ of the lines $L[a,c]$ and $L[b,d]$ is
\begin{align*}
\overline{f}=\frac{a+c-b-d}{ac-bd}.
\end{align*}
\end{prop}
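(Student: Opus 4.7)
The plan is to apply the intersection formula \eqref{eq:lis} directly to the lines $L(a,c)$ and $L(b,d)$, so that $f = \mathrm{LIS}[a,c,b,d]$, and then exploit the identity $\overline{z}=1/z$ valid for $z\in\{a,b,c,d\}\subset S^1$ to simplify the resulting rational expression and compute its complex conjugate.

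First I would substitute into the numerator of \eqref{eq:lis} with the ordering $(a,c,b,d)$: each term $\overline{a}c - a\overline{c}$ becomes $c/a - a/c = (c^2-a^2)/(ac)$, and similarly $\overline{b}d - b\overline{d} = (d^2-b^2)/(bd)$. Factoring $c^2-a^2 = -(a-c)(a+c)$ and $d^2-b^2 = -(b-d)(b+d)$ allows the common factor $(a-c)(b-d)$ to be pulled out of the numerator. The denominator similarly yields $\overline{a}-\overline{c} = -(a-c)/(ac)$ and $\overline{b}-\overline{d} = -(b-d)/(bd)$, so it factors as $-(a-c)(b-d)(bd-ac)/(abcd)$. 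After cancellation one obtains a clean closed form
\begin{equation*}
f = \frac{(a+c)bd - (b+d)ac}{bd - ac}.
\end{equation*}

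The final step is to conjugate this expression. Rather than rationalising $f$ itself, I would conjugate numerator and denominator separately, again using $\overline{a}=1/a$, etc. The conjugate of the denominator is $1/(bd) - 1/(ac) = (ac-bd)/(abcd)$, and the conjugate of the numerator collapses to $(a+c)/(abcd) - (b+d)/(abcd) = (a+c-b-d)/(abcd)$. The factor $abcd$ cancels when we take the quotient, yielding exactly
\begin{equation*}
\overline{f} = \frac{a+c-b-d}{ac-bd},
\end{equation*}
as claimed.

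I do not foresee a genuine conceptual obstacle here; the statement is essentially a direct specialisation of \eqref{eq:lis} to the unit circle. The only real pitfall is bookkeeping: one must be careful about the order in which the four points are fed into $\mathrm{LIS}$ (the formula is stated for lines $L(a,b)$ and $L(c,d)$, whereas here the lines are $L(a,c)$ and $L(b,d)$), and about sign conventions when pulling out factors such as $a-c$ versus $c-a$. Ensuring that the common factor $(a-c)(b-d)$ cancels correctly in both numerator and denominator is the step most prone to algebraic slips.
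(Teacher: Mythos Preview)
Your argument is correct and is precisely the route the paper indicates: the text immediately preceding the proposition says that on $S^1$ one has $\overline{z}=1/z$ and that \eqref{eq:lis} simplifies accordingly, and the proposition itself is merely cited from \cite{p01} without a written-out proof. Your computation of $f=\mathrm{LIS}[a,c,b,d]$ and its conjugate matches this outline exactly, so there is nothing to add.
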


\begin{prop}\label{LISprop}
Let $a,b \in \mathbb{C}$ with $|a|\neq |b|, |a||b| \neq 1.$ Then
\begin{align*}
(1)& \quad \quad {\rm LIS}[a,b, -1/\overline{a}, -1/\overline{b}]= 
\frac{b(1+|a|^2)-a(1+|b|^2)}{|a|^2 -|b|^2},\\
(2)& \quad \quad {\rm LIS}[a,b, 1/\overline{a}, 1/\overline{b}]= 
\frac{a(1-|b|^2)-b( 1-|a|^2)}{|a|^2 -|b|^2},\\
(3)& \quad \quad {\rm LIS}[a, 1/\overline{b},b, 1/\overline{a}]= 
\frac{a(1-|b|^2)+b(1-|a|^2)}{1-|a|^2 |b|^2}\, ,\\
(4)& \quad \quad {\rm LIS}[a, -1/\overline{b},b, -1/\overline{a}]= 
\frac{a(1+|b|^2)+b(1+|a|^2)}{1-|a|^2 |b|^2}\,.    
\end{align*}
\end{prop}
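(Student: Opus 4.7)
The plan is to prove Proposition \ref{LISprop} by direct substitution into formula \eqref{eq:lis}, exploiting the following two identities for the complex conjugate of the inversion-type points appearing in the four cases: $\overline{(1/\overline{z})}=1/z$ and $\overline{(-1/\overline{z})}=-1/z$ for every $z\in\C\setminus\{0\}$. After these substitutions, each of the four quantities entering \eqref{eq:lis}, namely $c-d$, $\overline{c}-\overline{d}$, $\overline{c}d-c\overline{d}$ and $\overline{a}b-a\overline{b}$, becomes a rational function of $a,b,\overline{a},\overline{b}$ whose denominator involves only $|a|^2,|b|^2$; multiplying the top and bottom of \eqref{eq:lis} by $|a|^2|b|^2$ will clear denominators and produce the stated closed forms. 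The assumptions $|a|\neq|b|$ (for (1), (2)) and $|a||b|\neq 1$ (for (3), (4)) appear precisely as non-vanishing of the resulting denominator.

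For case (1), I would set $c=-1/\overline{a}$, $d=-1/\overline{b}$ and check that
\[
c-d=\frac{\overline{a}-\overline{b}}{\overline{a}\,\overline{b}},\qquad \overline{c}-\overline{d}=\frac{a-b}{ab},\qquad \overline{c}d-c\overline{d}=\frac{\overline{a}b-a\overline{b}}{|a|^2|b|^2}.
\]
Plugging these into \eqref{eq:lis} and pulling out the common factor $\overline{a}b-a\overline{b}$ from the numerator, and a common factor $(|a|^2-|b|^2)(\overline{a}b-a\overline{b})/(|a|^2|b|^2)$ from the denominator, that factor cancels after the identifications $a\overline{a}=|a|^2$ and $b\overline{b}=|b|^2$; what remains in the numerator is precisely $b(1+|a|^2)-a(1+|b|^2)$ and in the denominator $|a|^2-|b|^2$. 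Case (2), with $c=1/\overline{a}$, $d=1/\overline{b}$, is structurally identical: only the sign of $c-d$ and of $\overline{c}-\overline{d}$ flips, which turns the factors $1+|a|^2$ and $1+|b|^2$ into $1-|a|^2$ and $1-|b|^2$ while leaving the denominator $|a|^2-|b|^2$ intact.

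Cases (3) and (4) require the same computation but with the ``crossed'' labelling $(a_1,b_1,c_1,d_1)=(a,\pm 1/\overline{b},b,\pm 1/\overline{a})$. The analogous direct calculation gives $\overline{a_1}b_1-a_1\overline{b_1}=(\overline{a}b-a\overline{b})/|b|^2$ and $\overline{c_1}d_1-c_1\overline{d_1}=-(\overline{a}b-a\overline{b})/|a|^2$, and the lengthier but mechanical expansion of $(\overline{a_1}-\overline{b_1})(c_1-d_1)-(\overline{c_1}-\overline{d_1})(a_1-b_1)$ again produces the factor $\overline{a}b-a\overline{b}$, this time multiplied by $1-|a|^2|b|^2$ rather than $|a|^2-|b|^2$ (which is why the hypothesis changes to $|a||b|\neq 1$). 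Collecting terms yields the numerators $a(1-|b|^2)+b(1-|a|^2)$ for case (3) and $a(1+|b|^2)+b(1+|a|^2)$ for case (4). The main obstacle is purely algebraic bookkeeping in these last two cases; I would organize the expansion by first clearing $\overline{a}$-$\overline{b}$ from the individual fractions, then grouping the resulting monomials by their $|a|^2,|b|^2$-content to expose the common factor $\overline{a}b-a\overline{b}$, whose cancellation then delivers the four claimed formulas.
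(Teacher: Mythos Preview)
Your proposal is correct and follows exactly the approach the paper indicates: the paper's own proof consists of the single line ``The proof follows from \eqref{eq:lis},'' and your direct substitution into that formula, with the simplifications via $\overline{(\pm 1/\overline{z})}=\pm 1/z$ and cancellation of the common factor $\overline{a}b-a\overline{b}$, is precisely the intended computation.
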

\begin{proof}
The proof follows from \eqref{eq:lis}.
\end{proof}

\begin{prop}\label{UCLISprop}
Let $a,b \in \mathbb{B}^2$ be points non-collinear with $0$ and
$c={\rm LIS}[a,b,0, i(a-b)].$ Then $L[a,b]\cap S^1 =\{a_1,b_1\}$
where
$$  a_1=c-i \frac{c}{|c|}\sqrt{1-|c|^2}, \quad b_1=c+i \frac{c}{|c|}\sqrt{1-|c|^2},$$
and $a_1,a,b,b_1$ are ordered in such a way that $|a_1-a|<|a_1-b|.$
\end{prop}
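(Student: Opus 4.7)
The plan is to identify $c$ geometrically as the foot of the perpendicular from the origin to $L(a,b)$. Multiplication by $i$ rotates $a-b$ by $90^\circ$, so the line through $0$ in the direction $i(a-b)$ is orthogonal to $L(a,b)$; their intersection $c$ is therefore the orthogonal projection of $0$ onto $L(a,b)$. Because $a,b$ are non-collinear with $0$, the origin does not lie on $L(a,b)$, so $c\neq 0$; and because $a\in L(a,b)\cap\B^2$, we have $|c|\le |a|<1$.

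Next, I would parametrize the line starting at $c$. Set $u = ic/|c|$, which is a unit vector perpendicular to $c$ and hence parallel to $a-b$. Then $L(a,b) = \{c + tu : t\in\R\}$, and since $c\perp u$ the Pythagorean identity gives $|c + tu|^2 = |c|^2 + t^2$. Setting this equal to $1$ yields $t = \pm\sqrt{1-|c|^2}$, so the two points of $L(a,b)\cap S^1$ are precisely $c\pm i(c/|c|)\sqrt{1-|c|^2}$, matching the formulas given for $a_1$ and $b_1$.

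The remaining issue is the ordering. Both $a$ and $b$ lie in $\B^2\cap L(a,b)$, so their parameter values $t_a,t_b$ satisfy $|t_a|,|t_b|<\sqrt{1-|c|^2}$, placing them strictly between the two intersection points on the line. Relabeling if necessary (that is, swapping which of the two formulas is called $a_1$ and which is called $b_1$), we may designate as $a_1$ the endpoint on the same side of the segment $[a,b]$ as $a$, which gives $|a_1-a|<|a_1-b|$ and the linear ordering $a_1,a,b,b_1$. There is no serious obstacle in this proof; the only step requiring insight is recognizing that $i(a-b)$ encodes the perpendicular direction, after which the result follows from elementary planar geometry.
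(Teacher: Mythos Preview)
Your argument is correct and in fact supplies the geometric content that the paper's one-line proof (``follows from \eqref{eq:lis}'') leaves implicit. Recognising $c$ as the foot of the perpendicular from $0$ to $L(a,b)$ is exactly the point of the construction $c=\mathrm{LIS}[a,b,0,i(a-b)]$, and once that is seen, your Pythagorean parametrisation $|c+tu|^2=|c|^2+t^2$ with $u=ic/|c|$ is the natural way to produce the two formulas for $a_1,b_1$. So the route is essentially the same, only spelled out more fully.

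One remark on the ordering clause. You are right to say ``relabeling if necessary'': with the fixed signs in the stated formulas the inequality $|a_1-a|<|a_1-b|$ is \emph{not} automatic. Indeed, interchanging the roles of $a$ and $b$ leaves $c$ (and hence the two displayed expressions) unchanged but reverses which of $a,b$ is nearer to a given endpoint; concretely, for $a=0.5i$, $b=0.5$ one finds $c=0.25+0.25i$ and the endpoint $c-i\frac{c}{|c|}\sqrt{1-|c|^2}$ is closer to $b$ than to $a$. The proposition should therefore be read as you read it: the set $\{a_1,b_1\}$ is given by the two formulas, and the labels are then assigned so that $a_1$ is the endpoint nearer to $a$. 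Your handling of this is the correct one.
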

\begin{proof}
The proof follows from \eqref{eq:lis}.
\end{proof}

\begin{nonsec}{\bf M\"obius transformations.}\label{mymob} \cite[Ex. 3.2, pp. 25-26 \& Def. 3.6, p. 27]{hkv}
For any $t\in\R$ and $u\in\R^n\backslash\{0\}$, the hyperplane perpendicular to the vector $u$ and at distance $t\slash|u|$ from the origin is
\begin{align*}
P(u,t)=\{x\in\R^n\text{ }|\text{ }x\cdot u=t\}\cup\{\infty\},   
\end{align*}
where $\cdot$ denotes the dot product. The reflection in this hyperplane is defined by the function $h:\overline{\R}^n\to\overline{\R}^n$,
\begin{align*}
h(x)=x-2(x\cdot u-t)\frac{u}{|u|^2},\quad h(\infty)=\infty. \end{align*}
Define then the inversion in $S^{n-1}(v,r)$ as $g:\overline{\R}^n\to\overline{\R}^n$,
\begin{align*}
g(x)=v+\frac{r^2(x-v)}{|x-v|^2},\quad g(v)=\infty,\quad g(\infty)=v.    
\end{align*}

A \emph{M\"obius transformation} is a function $f:\overline{\R}^n\to\overline{\R}^n$ that can be defined as a function composition $f=h_1\circ\cdots\circ h_m$ with an integer $m\geq1$ so that each $h_j$, $j=1,...,m$, is either a reflection in some hyperplane or an inversion in a sphere. The M\"obius transformation is called sense-preserving when $m$ here is even and sense-reversing when $m$ is odd. By \emph{Liouville's theorem}, any conformal mapping with a domain in $\R^n$, $n\geq3$, is a M\"obius transformation in $\overline{\R}^n$ restricted to this domain \cite[Rmk 3.44, p. 47]{hkv}.

In the two-dimensional case, the expression of a sense-preserving M\"obius transformation can be written as
\begin{align}\label{exp_mob}
z \mapsto \frac{az+b}{cz+d}\,, \quad a,b,c,d,z \in {\mathbb C},\quad ad-bc\neq 0.   
\end{align}
The expression of the sense-reversing M\"obius transformation can be obtained by replacing $z$ by $\overline{z}$ in \eqref{exp_mob}. The special M\"obius transformation
\begin{equation}\label{myTa}
T_a(z) = \frac{z-a}{1- \overline{a}z}\,, \quad a\in\B^2\setminus\{0\}\,,
\end{equation}
maps the unit disk $\mathbb{B}^2$ onto itself with $T_a(a) =0, T_a( \pm a/|a|)= \pm a/|a|\,.$ 
\end{nonsec}

Ahlfors \cite{a} gave the following factorization for $T_a$, $a\in\B^2\setminus\{0\}$, as a composition of an inversion followed by a reflection which is given here in the complex notation. Denote $a^*=a/|a|^2$. Let $r=\sqrt{|a|^{-2}-1}$ and $\sigma_a$ be the inversion in the circle $S^1(a^*,r)$ orthogonal to the unit circle $S^1$. Then $\sigma_{a}(a)=0$ and $\sigma_a(a^*)=\infty.$
Let $p_a$ be the reflection over the line through $0$ and 
perpendicular to $a.$ Then both $\sigma_a$ and $p_a$ are 
sense-reversing transformations and their composite transformation $T_a$ in \eqref{myTa} is a sense-preserving
M\"obius transformation with the decomposition
$$
T_a(z) = p_a(\sigma_a(z)), \quad \sigma_a(z)= \frac{a(1-(\overline{z}/ \overline{a}))}{1-  a \overline{z}},\quad p_a(z) =- \frac{a}{\overline{a}}\overline{z}.
$$
Clearly, $p_a$ is a Euclidean isometry and hence the information about distance distortion under $T_a$ is due to 
the mapping $\sigma_a.$
Moreover, for  $a,b\in {\mathbb B}^2 \setminus \{0\}\,, |a|\neq |b|,$  a M\"obius transformation $T_{a,b}: \B^2\to \B^2$ with
$ T_{a,b}(a) = b$ is given by
\begin{equation}\label{myTab}
T_{a,b}(z)= \frac{b}{ \overline{b}} \frac{\overline{c} z-1}{z-c},\quad c= \frac{a-b + a b ( \overline{a}- \overline{b})}{|a|^2-|b|^2}.
\end{equation}
The above simple formula for the point $c$  follows from  Proposition \ref{LISprop}(2)
 for the intersection of two lines 
$
c={\rm LIS}[a,b,a^*,b^*].
$
For $a,b\in\B^2\setminus\{0\}$, the M\"obius transformation $T_{a,b}$ is a reflection over the line $L(0,b)$ composed with the inversion in $S^1(c,r)$ where $r=\sqrt{|c|^2-1}$ so that $S^1(c,r)$ is orthogonal to the unit circle. See Figure \ref{fig1}.

\begin{thm} \label{LIP} Let $T_a$ and $T_{a,b}$ be the above M\"obius transformations \eqref{myTab}.
Then
\begin{equation}\label{myLip2}
\mbox{Lip}(T_a)= \frac{1+ |a|}{1-|a|}\,.
\end{equation}
and
\begin{equation}\label{LipTab}
\mbox{Lip}(T_{a,b})= \frac{|c|+1}{|c|-1}\,, \quad  
c= \frac{a-b + a b ( \overline{a}- \overline{b})}{|a|^2-|b|^2},
\end{equation}
where the Lipschitz constant are defined for the Euclidean metric.
\end{thm}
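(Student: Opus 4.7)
The plan is to reduce both Lipschitz computations to the supremum of the modulus of the complex derivative over the convex open set $\B^2$, and then evaluate each supremum by an elementary triangle-inequality argument. The reduction is standard: for a holomorphic self-map $f$ of a convex domain $\Omega$, the Euclidean Lipschitz constant equals $\sup_{z\in\Omega}|f'(z)|$. The inequality $\mbox{Lip}(f)\le\sup|f'|$ follows from integrating $f'$ along the segment $[z,w]\subset\Omega$, while the reverse inequality follows from $|f(z)-f(w)|/|z-w|\to|f'(z_0)|$ as $w\to z_0$. Both $T_a$ and $T_{a,b}$ are holomorphic self-maps of $\B^2$, so this reduction applies directly.

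For $T_a$, I would differentiate \eqref{myTa} to obtain $T_a'(z)=(1-|a|^2)/(1-\overline{a}z)^2$, so that
$$|T_a'(z)|=\frac{1-|a|^2}{|1-\overline{a}z|^2}.$$
The triangle inequality gives $|1-\overline{a}z|\ge 1-|a|\,|z|\ge 1-|a|$ on $\B^2$, with the infimum $1-|a|$ approached as $z\to a/|a|$ along the radius through $a$. Substituting yields $\sup_{\B^2}|T_a'|=(1-|a|^2)/(1-|a|)^2=(1+|a|)/(1-|a|)$, which is \eqref{myLip2}.

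For $T_{a,b}$, differentiating \eqref{myTab} should give $T_{a,b}'(z)=(b/\overline{b})(1-|c|^2)/(z-c)^2$; on taking moduli the unimodular prefactor $b/\overline{b}$ drops out, leaving
$$|T_{a,b}'(z)|=\frac{|c|^2-1}{|z-c|^2}.$$
The essential input now is the orthogonality of $S^1(c,r)$ to $S^1$ with $r=\sqrt{|c|^2-1}$, which forces $|c|>1$ and places $c$ strictly outside $\overline{\B}^2$. Then $|z-c|\ge |c|-|z|\ge |c|-1$ for $z\in\B^2$, with infimum $|c|-1$ approached as $z\to c/|c|$, giving $\sup_{\B^2}|T_{a,b}'|=(|c|^2-1)/(|c|-1)^2=(|c|+1)/(|c|-1)$, which is \eqref{LipTab}.

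No step is really an obstacle; the numerators of the two derivatives telescope cleanly, and the extremal point is in both cases the radial projection onto $\partial\B^2$ of the ``bad'' point ($a/|a|$ for $T_a$, $c/|c|$ for $T_{a,b}$). The one subtlety worth flagging is that in each case the supremum of $|f'|$ is approached only as $z$ tends to the boundary of $\B^2$, so it is the open-disk supremum, rather than a maximum attained at an interior point, that one identifies with $\mbox{Lip}(f)$.
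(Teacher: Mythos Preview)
Your argument is correct. The reduction $\mbox{Lip}(f)=\sup_{\B^2}|f'|$ for holomorphic $f$ on the convex domain $\B^2$ is exactly right, and your derivative computations and triangle-inequality estimates are accurate; in particular $|c|>1$ is justified by the orthogonality $r^2=|c|^2-1>0$ stated just before the theorem.

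The paper itself gives no real proof: it cites Beardon for \eqref{myLip2} and says \eqref{LipTab} ``follows from \eqref{myTab}''. The surrounding text, however, sets up the Ahlfors factorization $T_a=p_a\circ\sigma_a$ (and analogously $T_{a,b}$ as a reflection composed with the inversion in $S^1(c,\sqrt{|c|^2-1})$), remarking that the Euclidean isometry $p_a$ carries no distortion. This points to a slightly different, more geometric computation: for an inversion $\sigma$ in $S^1(c,r)$ one has the exact distance identity $|\sigma(z)-\sigma(w)|=r^2|z-w|/(|z-c|\,|w-c|)$, whence $\mbox{Lip}(\sigma|\B^2)=r^2/(|c|-1)^2=(|c|+1)/(|c|-1)$ when $r^2=|c|^2-1$, and the case of $T_a$ comes out the same way with $c=a^*$, $r^2=|a|^{-2}-1$. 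Your derivative approach and this inversion-distance approach are equivalent and equally elementary; yours packages the extremal analysis via $|f'|$, while the factorization route exhibits the two-variable ratio $|f(z)-f(w)|/|z-w|$ in closed form and reads off the supremum without passing to a limit.
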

\begin{proof} The formula \eqref{myLip2} was proved in   \cite[p.\,43]{b}. The second part follows
from \eqref{myTab}.
\end{proof}

\begin{figure}[ht]
    \centering
    \begin{tikzpicture}[scale=2.1]
    \clip (-1.15,-2.4) rectangle (2.3,1.1);
    \draw (0,0) circle (0.03cm);
    \draw (0,-0.7) circle (0.03cm);
    \draw (0.5,0) circle (0.03cm);
    \draw (0,-1.428) circle (0.03cm);
    \draw (2,0) circle (0.03cm);
    \draw (-1.062,-2.187) circle (0.03cm);
    \draw (3.625,4.375) -- (-5.750,-8.750);
    \draw (8.125,4.375) -- (-10.250,-8.750);
    \draw (0.977,0.208) arc (102.0:177.1:1.302);
    \draw[dashed] (-1.062,-2.187) -- (0.977,0.208);
    \draw (0,0) circle (1cm);
    \draw (-1.062,-2.187) circle (2.216cm);
    \node[scale=1.3] at (0,0.15) {$0$};
    \node[scale=1.3] at (-0.07,-0.57) {$a$};
    \node[scale=1.3] at (0.45,0.15) {$b$};
    \node[scale=1.3] at (0,-1.288) {$a^*$};
    \node[scale=1.3] at (2,0.15) {$b^*$};
    \node[scale=1.3] at (-1.09,-2.037) {$c$};
    \end{tikzpicture}
    \caption{The points $a,b,a^*,b^*$ when $a=-0.7i$ and $b=0.5$, the intersection point $c={\rm LIS}[a,b,a^*,b^*]$ of the lines $L(a,b)$ and $L(a^*,b^*)$, the unit circle, the circle $S^1(c,r)$ for $r=\sqrt{|c|^2-1}$, and the hyperbolic line $J^*[a,b]$. The end points of $J^*[a,b]$ are collinear with $c$, as denoted with dashed line. The M\"obius transformation $T_{a,b}$ is a reflection over the line $L(0,b)$ composed with the inversion in the circle $S^1(c,r)$.}
    \label{fig1}
\end{figure}
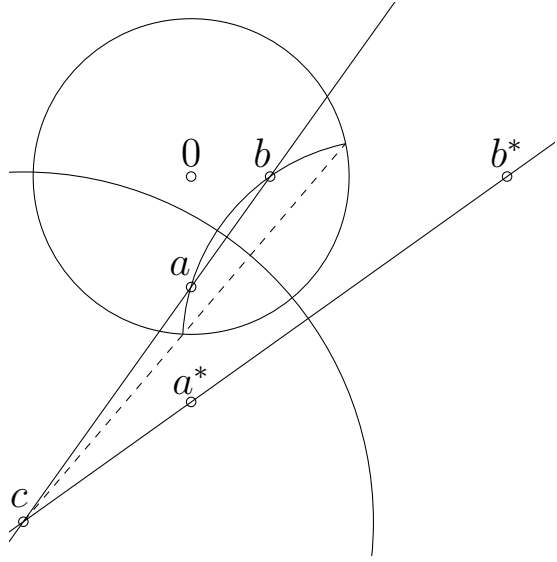

\begin{nonsec}{\bf Hyperbolic geometry.} Denote hyperbolic sine, cosine and tangent by sh, ch, and th, respectively. The hyperbolic metric $\rho_{\B^n}$ in the unit ball $\B^n$ is defined by
\begin{equation}\label{myrho}
{\rm sh}\frac{\rho_{\B^n}(x,y)}{2}= \frac{|x-y|}{\sqrt{(1-|x|^2)(1-|y|^2)}}\,,
\end{equation}
and, if $n=2$, this is equivalent to
\begin{align}
{\rm th}\frac{\rho_{\B^2}(x,y)}{2}=\frac{|x-y|}{|1-x\overline{y}|}=\frac{|x-y|}{A[x,y]}, 
\end{align}
where $A[x,y]$ is the Ahlfors bracket \cite[7.37]{avv} 
\begin{equation}\label{myahl}
A[x,y]=|1-x\overline{y}|=\sqrt{|x-y|^2+(1-|x|^2)(1-|y|^2)}.
\end{equation}
We denote the hyperbolic line through $x,y\in\B^2$ by $J^*[x,y]$. If $x,y$ are collinear with the origin, the line $J^*[x,y]$ is a diameter of the unit disk and, otherwise, an arc of the circle that passes through $x,y$ and is orthogonal to the unit circle \cite{b}. For $x,y\in\B^2\setminus\{0\}$, the end points of the hyperbolic line $J^*[x,y]$ can be expressed as $ep(x,y)$ and $ep(y,x)$ where 
\begin{equation}\label{epdef}
ep(x,y)=T_{-y}(T_y(x)/|T_y(x)|).
\end{equation}
Define then the \emph{absolute ratio} of any four distinct points $a,b,c,d\in\R^n$ as \cite{b}
\begin{align}
|a,b,c,d|=\frac{|a-c||b-d|}{|a-b||c-d|}.
\end{align} 
The hyperbolic metric fulfills
\begin{equation}\label{epdefrho}
\rho_{\B^2}(x,y)=\log|ep(x,y),x,y,ep(y,x)|,
\end{equation}
which shows that the hyperbolic metric is invariant under M\"obius transformations of the unit disk onto itself.
\end{nonsec}

\begin{thm}\label{myhmidp}\cite[Thm 1.4, p. 126]{wvz}
For given $x,y\in\B^2$, the hyperbolic midpoint $z\in\B^2$ with
$\rho_{\B^2}(x,z)=\rho_{\B^2}(y,z)=\rho_{\B^2}(x,y)/2$ is given by
\begin{equation}\label{myzformua}
z=\frac{y(1-|x|^2) + x(1-|y|^2)}{1-|x|^2|y|^2 + A[x,y] \sqrt{(1-|x|^2)(1-|y|^2)}}.
\end{equation}
\end{thm}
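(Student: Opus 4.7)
The plan is to use M\"obius invariance and reduce to the case where one of the points is the origin. Applying the disk automorphism $T_x$ from \eqref{myTa}, which sends $x$ to $0$, I set $w := T_x(y) = (y-x)/(1-\overline{x} y)$. Since $T_x$ is a hyperbolic isometry, it carries the sought midpoint $z$ to the hyperbolic midpoint $m$ of $0$ and $w$. As the hyperbolic line through $0$ and $w$ is a Euclidean diameter of $\B^2$, we have $m = \lambda w$ for some $\lambda \in (0,1)$.

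To pin down $\lambda$, I equate $\mathrm{th}(\rho_{\B^2}(0, \lambda w)/2)$ with $\mathrm{th}(\rho_{\B^2}(\lambda w, w)/2)$ using the th-distance formula preceding \eqref{myahl}, obtaining
$$\lambda |w| = \frac{(1-\lambda)|w|}{1 - \lambda |w|^2}.$$
This is a quadratic in $\lambda$ whose unique root in $(0,1)$ is $\lambda = 1/(1+\sqrt{1-|w|^2})$. Writing $D := A[x,y]$ and $S := \sqrt{(1-|x|^2)(1-|y|^2)}$, the Ahlfors identity \eqref{myahl} gives $1-|w|^2 = S^2/D^2$, and hence $\lambda = D/(D+S)$.

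Next I transport $m$ back by $T_x^{-1}(u) = (u+x)/(1+\overline{x} u)$. Clearing the factor $(1-\overline{x} y)(D+S)$ and using the elementary identity $x - |x|^2 y = x(1-\overline{x} y)$ in the numerator, I obtain
$$z = \frac{D(1-|x|^2)\,y + S(1-\overline{x} y)\,x}{D(1-|x|^2) + S(1-\overline{x} y)}.$$

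The main obstacle is the final algebraic simplification into the symmetric closed form \eqref{myzformua}. I multiply numerator and denominator by the conjugate $D(1-|x|^2) + S(1-x\overline{y})$ of the denominator. With the shorthand $A := D(1-|x|^2)$ and $B := S(1-\overline{x} y)$, the resulting numerator expands as $A^2 y + A\overline{B} y + ABx + |B|^2 x$; the cross terms collapse to $A\overline{B} y + AB x = DS(1-|x|^2)[y(1-|x|^2) + x(1-|y|^2)]$ and the outer terms to $A^2 y + |B|^2 x = D^2(1-|x|^2)[y(1-|x|^2) + x(1-|y|^2)]$, so the numerator factors as $D(1-|x|^2)(D+S)[y(1-|x|^2) + x(1-|y|^2)]$. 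A parallel reduction of $|A+B|^2$, using $D^2 = 1 - 2\,\mathrm{Re}(\overline{x} y) + |x|^2|y|^2$, collapses the (now real) denominator to $D(1-|x|^2)(D+S)(1-|x|^2|y|^2 + DS)$, and cancellation of the common factor yields \eqref{myzformua}.
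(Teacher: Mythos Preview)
Your argument is correct. The reduction via the isometry $T_x$, the determination of $\lambda=D/(D+S)$, and the algebraic simplification after multiplying by the conjugate of the denominator all check out; in particular the key identities $A^2 y+|B|^2 x=D^2(1-|x|^2)[y(1-|x|^2)+x(1-|y|^2)]$ (from $S^2=(1-|x|^2)(1-|y|^2)$) and $|A+B|^2=D(1-|x|^2)(D+S)(1-|x|^2|y|^2+DS)$ (from $2\,\mathrm{Re}(\overline{x}y)=1+|x|^2|y|^2-D^2$) are exactly as you claim.

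There is nothing to compare against here: the paper does not supply a proof of this theorem but simply quotes it from \cite[Thm~1.4, p.~126]{wvz}. Your self-contained derivation via M\"obius reduction to the diameter case is a standard and efficient route to the formula, and would be a welcome addition where the original reference may not be at hand.
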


\begin{nonsec}{\bf Chordal geometry.}
Define the {\it stereographic
projection } \ $\ \pi :\overline{\R}^2\to S^2(\frac{1}{2} e_3,\frac{1}{2} )$, $e_3=(0,0,1)$,
as \cite[(3.4), p. 28]{hkv}
\begin{equation}\label{1.13.}
  \pi (x)= e_3+\frac{x-e_3}{ |x-e_3|^2} \,,\;x\in {\R}^2\;;\;\,
  \pi (\infty)=e_3\;.
\end{equation}
The mapping $\pi$ is now the restriction to $\overline{\R}^2$ of the inversion in $S^2(e_3,1)$ and, since $f^{-1}=f$  for every inversion  $f$, $\pi$ maps the ``Riemann sphere'' $S^2(\frac{1}{2} e_3,\frac{1}{2} )$
onto  $\overline{\R}^2$. By using the stereographic projection $\pi$, we can define the {\it spherical (chordal) metric} $q$ in $\overline{\R}^2$ as \cite[(3.5), p. 29]{hkv}
\begin{equation}\label{1.14.}
   q(x,y)=|\pi(x)-\pi(y)|\;;\;\, x,\,y\in  \overline{\R}^2\;,
\end{equation}
which can be extended to the $n$-dimensional case so that \cite[(3.6), p. 29]{hkv}
\begin{equation}\label{1.15.}
  \begin{cases}
     {\displaystyle
			q(x,y)=\frac{|x-y|}{\sqrt{1+|x|^2}\;\sqrt{1+|y|^2}}}\;;\;\, x,\,y\in\R^n\;,&\\
	{\displaystyle 
	q(x,\infty)=\frac{1}{\sqrt{1+|x|^2}}}\;;\;\,x\in\R^n.&
  \end{cases}
\end{equation}
\end{nonsec}
For $x\in\R^n$ and $r>0$, denote the chordal ball by $B_q(x,r)=\{y\in\R^n\,:\;q(x,y)<r\}$. 

\section{Lipschitz constants in chordal metric}

In this section, we introduce an isometry $t_m$ of the chordal metric. Note that, while the mapping $T_a$ is an isometry for the hyperbolic metric, it is not a chordal isometry. However, our computer tests suggest that it has the following Lipschitz constant. 

\begin{conjecture}
For $a\in\B^n$, the Lipschitz constant of the mapping $T_a$ in the chordal metric is
\begin{align*}
Lip(T_a|\overline{\R}^n)\equiv
\sup\left\{\frac{q(T_a(x),T_a(y))}{q(x,y)}\,:\,x,y\in\overline{\R}^n\right\}
=\frac{1+|a|}{1-|a|}.     
\end{align*}
\end{conjecture}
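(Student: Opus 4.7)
The plan is to derive a closed-form factorization of the chordal distortion ratio that separates the variables $x$ and $y$, reducing the conjecture to maximizing a single-variable function.

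From the standard M\"obius identity
\[T_a(x)-T_a(y)=\frac{(1-|a|^{2})(x-y)}{(1-\overline{a}x)(1-\overline{a}y)}\]
one obtains $|T_a(x)-T_a(y)|/|x-y|=(1-|a|^{2})/(|1-\overline{a}x|\,|1-\overline{a}y|)$. A direct expansion of $|1-\overline{a}z|^{2}+|z-a|^{2}$ yields
\[1+|T_a(z)|^{2}=\frac{(1+|a|^{2})(1+|z|^{2})-4\operatorname{Re}(\overline{a}z)}{|1-\overline{a}z|^{2}}.\]
Substituting both formulas into $q(u,v)=|u-v|/(\sqrt{1+|u|^{2}}\sqrt{1+|v|^{2}})$ and cancelling the $|1-\overline{a}x|^{2}$ and $|1-\overline{a}y|^{2}$ factors gives the clean separated identity
\[\left(\frac{q(T_a(x),T_a(y))}{q(x,y)}\right)^{\!2}=g(x)\,g(y),\quad g(z):=\frac{(1-|a|^{2})(1+|z|^{2})}{(1+|a|^{2})(1+|z|^{2})-4\operatorname{Re}(\overline{a}z)},\]
where $g(\infty):=(1-|a|^{2})/(1+|a|^{2})$ is defined by continuity.

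It follows that $\mathrm{Lip}(T_a\,|\,\overline{\R}^{n})^{2}=\sup_{x,y}g(x)g(y)=(\sup_{z}g(z))^{2}$. Since $\operatorname{Re}(\overline{a}z)\le|a|\,|z|$ by Cauchy--Schwarz, with equality iff $z$ is a nonnegative scalar multiple of $a$, the supremum reduces to maximizing
\[\varphi(t):=\frac{(1-|a|^{2})(1+t^{2})}{(1+|a|^{2})(1+t^{2})-4|a|\,t},\qquad t\in[0,\infty].\]
Elementary calculus locates the unique interior critical point at $t=1$ with $\varphi(1)=(1+|a|)/(1-|a|)$, while $\varphi(0)=\varphi(\infty)=(1-|a|^{2})/(1+|a|^{2})<1$. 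Hence $\sup g=(1+|a|)/(1-|a|)$, approached in the limit $z\to a/|a|$.

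For $n\ge 3$ the expression $|1-\overline{a}z|^{2}$ should be interpreted as $1-2\,a\cdot z+|a|^{2}|z|^{2}$ and $T_a$ as the Ahlfors composition $p_a\circ\sigma_a$ recalled before Theorem~\ref{LIP}; the computations above carry over verbatim. The main obstacle is spotting the separated factorization in the first step; once it is in place, only the one-variable maximization remains. A minor subtlety is that the extremal ratio is not attained by any explicit pair $(x,y)$ but only approached along sequences with $x,y\to a/|a|$, so the Lipschitz constant must be phrased as a supremum rather than a maximum.
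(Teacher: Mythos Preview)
The paper does not prove this statement at all: it is explicitly labeled a \emph{Conjecture}, and the only support offered is the remark that ``our computer tests suggest'' the stated value. There is therefore no paper proof to compare against; your proposal goes strictly beyond what the authors establish and in fact settles their conjecture.

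Your argument is correct. The key step is the separable factorization
\[
\left(\frac{q(T_a(x),T_a(y))}{q(x,y)}\right)^{2}=g(x)\,g(y),
\]
which reduces the two-variable supremum to the one-variable maximization of $g$. The verification of the two identities you use (the difference formula for $T_a$ and the expression for $1+|T_a(z)|^{2}$) is routine, and the calculus showing $\varphi'(t)=0$ iff $t=1$ is clean: differentiating $\varphi$ gives a factor $4|a|(1-t^{2})$ in the numerator. The observation that the supremum is a limit rather than a maximum (because the unique maximizer of $g$ is the single point $a/|a|$, so one cannot take $x\neq y$ both extremal) is accurate and worth recording.

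One small wording point for the higher-dimensional case: the complex difference identity $T_a(x)-T_a(y)=(1-|a|^{2})(x-y)/\bigl((1-\overline{a}x)(1-\overline{a}y)\bigr)$ has no direct meaning in $\R^{n}$ for $n\ge 3$; what carries over verbatim is the \emph{modulus} version
\[
|T_a(x)-T_a(y)|=\frac{(1-|a|^{2})\,|x-y|}{\sqrt{\,1-2a\cdot x+|a|^{2}|x|^{2}\,}\,\sqrt{\,1-2a\cdot y+|a|^{2}|y|^{2}\,}},
\]
which follows from the standard distortion formula for the inversion $\sigma_a$ together with the fact that $p_a$ is a Euclidean isometry. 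With that adjustment the rest of your proof is unchanged.
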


It is well-known that $(1+|a|)/(1-|a|)$ is also the Lipschitz constant $Lip(T_a|\B^n)$ for the Euclidean metric, as stated in Theorem \ref{LIP}.

Let us then define the chordal isometry. Recall the notation $a^*$ for $a\in\R^2\setminus\{0\}$ and $p_a$ from Subsection \ref{mymob}. Let $s_a$ be the inversion in the circle $S^1(-a^*,\sqrt{1+|a|^{-2}})$. Now, we can define a chordal isometry as \cite[(3.23), p. 40]{hkv}
\begin{align}\label{chrismtry}
t_a=p_a\circ s_a.    
\end{align}
Trivially, $t_a$ is a M\"obius transformation and it satisfies $t_a(a)=0$. In fact, we have the following formula \cite[(B.9), p. 459]{hkv}
\begin{align*}
t_a(z)=\frac{z-a}{1+\overline{a}z}.    
\end{align*}
Furthermore, we have \cite[(B.10), p. 459]{hkv}
\begin{align*}
t_a(z)=-t_{-a}(-z).    
\end{align*}

\begin{theorem}\label{thm_cmp}\cite[Thm 5.12]{fcp}
For $ a,b\in\R^2$, the chordal midpoint $ m $ is given by
$$
  m=\frac{a(1+|b|^2)+b(1+|a|^2)}
         {|1+a\overline{b}|\sqrt{(1+|a|^2)(1+|b|^2)}-|ab|^2+1}.
$$
\end{theorem}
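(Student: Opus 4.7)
The plan is to exploit the chordal isometry $t_a$ defined in \eqref{chrismtry} to reduce the claim to the base case $a=0$, then transport the midpoint back by $t_a^{-1}$. Because $t_a$ is a M\"obius transformation preserving the chordal metric $q$ and sending $a$ to $0$, it maps the chordal midpoint $m$ of $a,b$ to the chordal midpoint of $0$ and $b':=t_a(b)=(b-a)/(1+\overline{a}b)$. On the Riemann sphere, any chordal geodesic through the south pole $\pi(0)=0$ is a meridian, whose stereographic image is the Euclidean segment through the origin; in particular, the chordal geodesic from $0$ to $b'$ is $[0,b']$. Hence $t_a(m)=\lambda b'$ for some $\lambda\in(0,1)$.

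To pin down $\lambda$, I would solve $q(0,\lambda b')=q(\lambda b',b')$ using \eqref{1.15.}; this rearranges to $\lambda(1+\sqrt{1+|b'|^2})=1$. Writing $D:=\sqrt{(1+|a|^2)(1+|b|^2)}$ and $E:=|1+\overline{a}b|$, the elementary identity $(1+|a|^2)(1+|b|^2)=|1+\overline{a}b|^2+|a-b|^2$ gives $\sqrt{1+|b'|^2}=D/E$, so
$$t_a(m)=\frac{E(b-a)}{(D+E)(1+\overline{a}b)}.$$
Applying $t_a^{-1}(w)=(w+a)/(1-\overline{a}w)$ and using $a+|a|^2b=a(1+\overline{a}b)$ to simplify the numerator yields a first expression
$$m=\frac{Da(1+\overline{a}b)+Eb(1+|a|^2)}{D(1+\overline{a}b)+E(1+|a|^2)}. \quad (\ast)$$

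The formula $(\ast)$ is not yet in the symmetric form of the theorem. To finish, I would rerun the same argument using $t_b$ in place of $t_a$, obtaining a second expression equal to $(\ast)$ with $a$ and $b$ swapped. Writing both as $m\cdot\mathrm{den}=\mathrm{num}$ and summing, the conjugate pair $(1+\overline{a}b)+(1+a\overline{b})=2+2\,\mathrm{Re}(\overline{a}b)$ appears naturally. Using $E^2=1+2\,\mathrm{Re}(\overline{a}b)+|ab|^2$ and $D^2=1+|a|^2+|b|^2+|ab|^2$, the summed coefficient of $m$ collapses to $(D+E)(1+DE-|ab|^2)$, while the summed right-hand side collapses to $(D+E)\bigl[a(1+|b|^2)+b(1+|a|^2)\bigr]$ (the two $D$- and $E$-coefficients turn out to coincide). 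Cancelling $D+E$ gives exactly the formula claimed.

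The main obstacle is not the geometric reduction but this final algebraic simplification: massaging $(\ast)$ alone into the target form is unpleasant, and the symmetrization trick of averaging the $t_a$- and $t_b$-derived expressions is what cleanly forces the denominator into the symmetric shape $|1+a\overline{b}|\sqrt{(1+|a|^2)(1+|b|^2)}-|ab|^2+1$.
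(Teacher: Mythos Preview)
Your argument is correct. The paper itself does not supply a proof of this statement; it is quoted verbatim as \cite[Thm~5.12]{fcp}, so there is no in-paper proof to compare against. Your route via the chordal isometry $t_a$ of \eqref{chrismtry} is natural and the computations check: the identity $|1+\overline{a}b|^2+|a-b|^2=(1+|a|^2)(1+|b|^2)$ indeed gives $\sqrt{1+|b'|^2}=D/E$, inversion by $t_a^{-1}$ produces $(\ast)$ as written, and your symmetrization trick works because $a(1+\overline{a}b)+b(1+a\overline{b})=a(1+|b|^2)+b(1+|a|^2)$ makes the $D$- and $E$-coefficients on the right coincide, while on the left $D\bigl(2+2\,\mathrm{Re}(\overline{a}b)\bigr)+E\bigl(2+|a|^2+|b|^2\bigr)=D(E^2+1-|ab|^2)+E(D^2+1-|ab|^2)=(D+E)(DE+1-|ab|^2)$. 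One small point you should make explicit is that the argument (and the formula itself) tacitly exclude the antipodal case $1+a\overline{b}=0$, where the chordal midpoint is not unique and the stated denominator vanishes; also, your claim that the great-circle arc from $\pi(0)$ to $\pi(b')$ projects to the Euclidean segment $[0,b']$ relies on the fact that every great circle of $S^2(\tfrac12 e_3,\tfrac12)$ through the south pole is a meridian, which is true here because any plane through the centre $\tfrac12 e_3$ and the south pole $0$ also contains the north pole $e_3$.
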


We see from the above formula that, if $a,b\in\mathbb{B}^2$, their chordal midpoint $m\in\B^2$.  

\begin{corollary}
For $a,b\in\R^2$, let $m$ be their chordal midpoint as in Theorem \ref{thm_cmp}. Then the chordal isometry $t_m:\overline{\R}^2\to\overline{\R}^2$ has the formula
\begin{align*}
t_m(z)=\frac{\big(|1+a\overline{b}|\sqrt{(1+|a|^2)(1+|b|^2)}+(1-|ab|^2)\big)z
               -ab(\overline{a}+\overline{b})-(a+b)}
             {\big(\overline{ab}(a+b)+(\overline{a}+\overline{b})\big)z+
              |1+a\overline{b}|\sqrt{(1+|a|^2)(1+|b|^2)}+1-|ab|^2}.    
\end{align*}
Furthermore, we have
\begin{align*}
t_m(a) &= \frac{a|1+a\overline{b}|\sqrt{1+|b|^2}
                    -b(1+a\overline{b})\sqrt{1+|a|^2}}
                 {|1+a\overline{b}|\sqrt{1+|b|^2}
                     +(1+a\overline{b})\sqrt{1+|a|^2}},\\ 
t_m(b) &= \frac{b|1+\overline{a}b|\sqrt{1+|a|^2}
                   -a(1+\overline{a}b)\sqrt{1+|b|^2}}
                 {|1+\overline{a}b|\sqrt{1+|a|^2}
                    +(1+\overline{a}b)\sqrt{1+|b|^2}}.    
\end{align*}
\end{corollary}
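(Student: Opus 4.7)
The plan is to substitute the expression for $m$ from Theorem \ref{thm_cmp} directly into the Möbius formula $t_m(z)=(z-m)/(1+\overline{m}z)$ and simplify. Write the numerator of $m$ as $N=a(1+|b|^2)+b(1+|a|^2)$ and its denominator as $D=|1+a\overline{b}|\sqrt{(1+|a|^2)(1+|b|^2)}+1-|ab|^2$. Note that $D$ is real, so $\overline{m}=\overline{N}/D$, and
\begin{align*}
t_m(z)=\frac{z-N/D}{1+\overline{N}z/D}=\frac{Dz-N}{D+\overline{N}z}.
\end{align*}
This is already the claimed shape; it remains to rewrite $N$ and $\overline{N}$ in the forms shown in the statement. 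The key identity is the purely algebraic manipulation
\begin{align*}
a(1+|b|^2)+b(1+|a|^2)=(a+b)+a b\,\overline{b}+b a\,\overline{a}=(a+b)+ab(\overline{a}+\overline{b}),
\end{align*}
and similarly its conjugate $\overline{N}=(\overline{a}+\overline{b})+\overline{ab}(a+b)$. Substituting these into $Dz-N$ and $D+\overline{N}z$ produces the stated formula for $t_m(z)$ verbatim.

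For the values $t_m(a)$ and $t_m(b)$, I would set $z=a$ (respectively $z=b$) in $(aD-N)/(D+\overline{N}a)$ and bring the two factors $\sqrt{1+|a|^2}$ hidden in $D$ to the surface. Explicitly, after expanding
\begin{align*}
aD-N=a|1+a\overline{b}|\sqrt{(1+|a|^2)(1+|b|^2)}-(1+|a|^2)(ab\overline{b}+b),
\end{align*}
the grouping $ab\overline{b}+b=b(1+a\overline{b})$ allows one to factor $(1+|a|^2)(1+a\overline{b})\cdot b$ out of the second term, and then $\sqrt{1+|a|^2}$ pulls out of both terms. An identical expansion of the denominator gives
\begin{align*}
D+\overline{N}a=|1+a\overline{b}|\sqrt{(1+|a|^2)(1+|b|^2)}+(1+|a|^2)(1+a\overline{b}),
\end{align*}
after the terms $-|ab|^2+|a|^2(1+|b|^2)=|a|^2$ combine with $1$ to give $1+|a|^2$; again $\sqrt{1+|a|^2}$ factors out. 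Cancelling this common factor yields the claimed formula for $t_m(a)$. The formula for $t_m(b)$ follows by the symmetry $(a\leftrightarrow b)$, using $\overline{1+a\overline{b}}=1+\overline{a}b$ so that $|1+a\overline{b}|=|1+\overline{a}b|$.

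The calculation is essentially bookkeeping; the only subtlety is the cancellation that promotes the constant $1-|ab|^2$ inside $D$ to the perfect factor $(1+|a|^2)(1+a\overline{b})$ after adding $|a|^2(1+|b|^2)$ and $a\overline{b}(1+|a|^2)$, which is what permits the square-root factor $\sqrt{1+|a|^2}$ to be pulled out cleanly. No additional geometric input beyond Theorem \ref{thm_cmp} and the definition \eqref{chrismtry} of $t_m$ is needed.
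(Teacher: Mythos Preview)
Your argument is correct and is exactly the approach the paper takes: the paper's proof is the single sentence ``The result follows by inputting $m$ as in Theorem \ref{thm_cmp} into $t_m(z)=(z-m)/(1+\overline{m}z)$,'' and you have simply carried out that substitution and simplification in full detail. Your algebraic bookkeeping (the rewriting $N=(a+b)+ab(\overline{a}+\overline{b})$ and the cancellation $-|ab|^2+|a|^2(1+|b|^2)=|a|^2$ that lets $\sqrt{1+|a|^2}$ factor out) is accurate.
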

\begin{proof}
The result follows by inputting $m$ as in Theorem \ref{thm_cmp} into $t_m(z)=(z-m)/(1+\overline{m}z)$. 
\end{proof}

\section{M\"obius symmetrization}

In this section, we prove a few results related to using a M\"obius transformation for the symmetrization of quadruples of points on the complex plane.

\begin{lemma}\cite[Lemma 3.17, p. 34]{hkv}
Let $f$ be a M\"obius transformation of $\overline{\R}^n$ that maps a quadruple 0, $e_1$, $x$, and $\infty$ of distinct points onto the quadruple $-e_1$, $y$, $-y$, and $e_1$, respectively, such that $|y|\leq1$. Then
\begin{align*}
|y|=\frac{|x-e_1|}{1+|x|+t}\quad\text{and}\quad|y+e_1|^2=\frac{|y-e_1|^2}{|x|}=\frac{4}{1+|x|+t},    
\end{align*}
where $t=\sqrt{(1+|x|)^2-|x-e_1|^2}$.
\end{lemma}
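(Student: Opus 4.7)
The plan is to exploit the M\"obius invariance of the absolute ratio
$|a,b,c,d|$, with the usual limiting convention when one of the four
points is $\infty$. Applying the invariance to two different orderings of
the source quadruple $\{0,e_1,x,\infty\}$ and its image
$\{-e_1,y,-y,e_1\}$, the point at infinity drops out and one obtains
\begin{align*}
|x| &= |0,e_1,x,\infty| = |-e_1,y,-y,e_1| = \frac{|y-e_1|^2}{|y+e_1|^2},\\
|x-e_1| &= |0,e_1,\infty,x| = |-e_1,y,e_1,-y| = \frac{4|y|}{|y+e_1|^2}.
\end{align*}
The first identity is already the middle equality of the claim, so it
remains only to identify $|y|$ and $|y+e_1|^2$ individually.

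Next, I would combine these two identities with the polarization identity
$|y+e_1|^2+|y-e_1|^2=2(|y|^2+1)$, valid for any $y\in\R^n$. Substituting
the first of the above equations to eliminate $|y-e_1|^2$ gives
$|y+e_1|^2=2(|y|^2+1)/(1+|x|)$. Feeding this into the second equation
then eliminates $|y+e_1|^2$ as well and leaves a quadratic in the single
unknown $|y|$:
\begin{equation*}
|x-e_1|\,|y|^2-2(1+|x|)\,|y|+|x-e_1|=0.
\end{equation*}

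The final step is to solve this quadratic. Its discriminant is exactly
$4((1+|x|)^2-|x-e_1|^2)=4t^2$, and the two roots
$((1+|x|)\pm t)/|x-e_1|$ are mutually reciprocal (their product equals
$1$). The main, and in fact only, real subtlety is to select the correct
root; the normalising hypothesis $|y|\le 1$ forces the minus sign.
Rationalising $((1+|x|)-t)/|x-e_1|$ by multiplying numerator and
denominator by $(1+|x|)+t$ uses the relation $(1+|x|)^2-t^2=|x-e_1|^2$ to
cancel one factor of $|x-e_1|$, which yields $|y|=|x-e_1|/(1+|x|+t)$.
Substituting this back into the second identity finally produces
$|y+e_1|^2=4|y|/|x-e_1|=4/(1+|x|+t)$, completing the chain of equalities
stated in the lemma.
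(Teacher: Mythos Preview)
Your argument is correct: the two absolute-ratio identities, the parallelogram law, and the resulting quadratic in $|y|$ all check out, and the selection of the root with the minus sign is justified exactly by the hypothesis $|y|\le 1$. The paper itself does not prove this lemma at all---it is merely quoted from \cite[Lemma~3.17, p.~34]{hkv}---so there is no in-paper proof to compare against; your write-up supplies a clean self-contained derivation.
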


\begin{lemma}\label{lem_symth}
For four distinct points $a,b,c,d\in\R^2$, let $h$ be the M\"obius transformation of $\overline{\R}^2$ that maps $a,b,c,d$ onto $-1,y,-y,1$, respectively, for some point $y\in\R^2$. If $a,b,c,d$ are collinear and $|a-b|=|c-d|$, then
\begin{align*}
h(z)=-\frac{2}{a-d}\,z+\frac{a+d}{a-d}.    
\end{align*}
Otherwise,
\begin{align*}
h(z)&=\frac{pz+q}{z+s}\quad\text{with}\\
p&=-\frac{(b-c)y+a-d}{a-b-c+d},\quad
q=\frac{(b-c)(a+d)y+(b+c)(a-d)}{2(a-b-c+d)},\\
s&=-\frac{(b+c)(a-d)y+(b-c)(a+d)}{2((a-d)y+(b-c))},
\end{align*}
where 
\begin{align*}
y&=\frac{-k_1\pm\sqrt{k_1^2-k_0^2}}{k_0}\quad\text{with}\\
k_0&=(b-c)(a-d),\quad
k_1=(a-c)(b-d)+(a-b)(c-d).
\end{align*}
The two possible solutions for $y$ above are reciprocal numbers so we can choose $y$ so that $|y|<1$ in this case.
\end{lemma}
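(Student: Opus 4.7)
\emph{Proof sketch.}

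My plan rests on two standard properties of M\"obius transformations of $\overline{\R}^2$: such a map is uniquely determined by its values at three distinct points, and it preserves the cross-ratio. I would first apply cross-ratio invariance, $[a,b,c,d] = [-1,y,-y,1]$, together with the direct calculation $[-1,y,-y,1] = (y-1)^2/(-4y)$, to obtain after clearing denominators
\[
(a-d)(b-c)\,y^2 + 2\bigl[(a-c)(b-d)+(a-b)(c-d)\bigr]\,y + (a-d)(b-c) = 0,
\]
which is $k_0 y^2 + 2 k_1 y + k_0 = 0$. Because the constant and leading coefficients agree, Vieta's formula forces the product of the two roots to equal $1$: the roots are reciprocal, and we pick the one with $|y|\le 1$.

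Next I would treat the affine case separately. Any M\"obius transformation of $\overline{\R}^2$ that fixes $\infty$ is affine, and the unique affine map sending $a\mapsto -1$ and $d\mapsto 1$ is $h(z) = -2z/(a-d) + (a+d)/(a-d)$. A short computation gives $h(b)+h(c) = (2(a+d)-2(b+c))/(a-d)$, which vanishes precisely when $a+d=b+c$; under the hypothesis of collinearity and $|a-b|=|c-d|$ (with the natural ordering $a-b=c-d$ on the line), this equality holds, so $h(c) = -h(b) = -y$ with $y = (a+d-2b)/(a-d)$.

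In the generic case, $a+d\ne b+c$, I would write $h(z) = (pz+q)/(z+s)$. From $h(a)=-1$ and $h(d)=1$ elimination yields $q = \tfrac{1}{2}[(d-a)-p(a+d)]$ and $s = -\tfrac{1}{2}[(a+d)+p(a-d)]$. To isolate $p$ I would use both remaining conditions: rewriting each of $h(b)=y$ and $h(c)=-y$ in the form $s(1\mp y)=\cdots$ and equating kills $s$, leaving
\[
p = -\frac{(b-c)y+(a-d)}{a-b-c+d}.
\]
Back-substitution produces $q$ in the claimed form and a first expression for $s$ with denominator $2(a-b-c+d)$. The symmetric form of $s$ given in the statement is then obtained by multiplying numerator and denominator by a common factor and simplifying with the quadratic for $y$ to absorb the $y^2$ terms; the identity $h(c)=-y$ is automatic once $y$ is chosen as in Step~1.

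The main obstacle I anticipate is the algebraic bookkeeping in this last step: the formulas for $p,q,s$ emerging from the linear system are asymmetric in $b$ versus $c$, whereas the lemma presents them in a form manifestly invariant under the substitution $b \leftrightarrow c,\; y \leftrightarrow -y$. Using the quadratic $k_0 y^2 + 2 k_1 y + k_0 = 0$ as a substitution rule to massage the natural intermediate expressions into the stated symmetric forms is the only step requiring real care.
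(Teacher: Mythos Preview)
Your argument is correct and complete, but the organization differs from the paper's in two respects.

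First, you obtain the quadratic $k_0y^2+2k_1y+k_0=0$ at the outset from cross-ratio invariance, whereas the paper derives it last, by eliminating $p$ from the pair of relations it calls $Y_1=0$ and $Y_2=0$. Your route is more conceptual and makes the reciprocity of the two roots immediate.

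Second, and more to the point of your stated ``main obstacle'': the paper sets up the four conditions $h(a)=-1$, $h(b)=y$, $h(c)=-y$, $h(d)=1$ as a linear system in $p,q,s$ and then forms the four combinations (sum and difference of the $a,d$ pair; sum and difference of the $b,c$ pair). Eliminating $q$ from one pair gives your formula for $p$, and eliminating $p$ from the other pair, namely $(a-d)p+2s+(a+d)=0$ and $(-2s-b-c)y+(b-c)p=0$, gives
\[
s=-\frac{(b+c)(a-d)y+(b-c)(a+d)}{2\bigl((a-d)y+(b-c)\bigr)}
\]
directly, in symmetric form, with no appeal to the quadratic in $y$. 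In your sequential scheme the same shortcut is available: after writing $s=-\tfrac12\bigl[(a-d)p+(a+d)\bigr]$, substitute this into the \emph{difference} of the $b$ and $c$ conditions to get a \emph{second} expression $p=-y(a-b-c+d)/\bigl((b-c)+y(a-d)\bigr)$, and feed that one back into $s$; the stated denominator appears immediately. So the algebraic massage via $k_0y^2+2k_1y+k_0=0$ that you anticipate is unnecessary.

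One minor point you already flag: the affine case really is the case $a+d=b+c$, and your reading of ``collinear with $|a-b|=|c-d|$'' as $a-b=c-d$ is the intended one; the paper's proof leaves this implicit as well.
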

\begin{proof}
First, if $a,b,c,d$ are collinear and satisfy $|a-b|=|c-d|$,
then $h$ is a linear map and can be written $h(z)=uz+v$. Since
\begin{align*}
h(a)=ua+v=-1,\quad h(d)=ud+v=1,    
\end{align*}
we have
\begin{align*}
v=-\frac{(a+d)u}{2},\quad u=-\frac{2}{a-d}.    
\end{align*}
Thus, the first part of the lemma follows.

Next, let us consider the other case. We may assume that $h(z)=\dfrac{pz+q}{z+s}$ since $h$ is clearly not a linear map. From the conditions $h(a)=-1,\ h(b)=y,\ h(c)=-y,\ h(d)=1$, we obtain
\begin{align} 
\label{A} &ap+q+s+a=0, \\
\label{B} &-(s+b)y+bp+q=0, \\
\label{C} &(s+c)y+cp+q=0, \\
\label{D} &dp+q-s-d=0.
\end{align}
Calculating \eqref{A}+\eqref{D}, \eqref{A}-\eqref{D}, \eqref{B}+\eqref{C}, and \eqref{B}-\eqref{C} results in the following equations, respectively: 
\begin{align}
\label{A+D} &  (a+d)p+2q+a-d=0,\\
\label{A-D} & (a-d)p+2s+a+d=0,\\
\label{B+C} & (-b+c)y+(b+c)p+2q=0,\\
\label{B-C} & (-2s-b-c)y+(b-c)p=0.
\end{align}
By eliminating $q$ from \eqref{A+D} and \eqref{B+C}, we have
\begin{align*}
Y_1\equiv(b-c)y+(a-b-c+d)p+a-d=0\quad\Leftrightarrow\quad
p=-\frac{(b-c)y+a-d}{a-b-c+d}.
\end{align*}
Similarly, by eliminating $p$ from \eqref{A+D} and \eqref{B+C}, we have
\begin{align*}
&(b-c)(a+d)y-2(a-b-c+d)q+(b+c)(a-d)=0,\\
\Leftrightarrow\quad &q=\frac{(b-c)(a+d)y+(b+c)(a-d)}{2(a-b-c+d)}.
\end{align*}
By eliminating $s$ from \eqref{A-D} and \eqref{B-C}, we have
\begin{align*}
Y_2\equiv((a-d)p+a-b-c+d)y+(b-c)p=0.
\end{align*}
Elimination of $p$ from \eqref{A-D} and \eqref{B-C} gives us
\begin{align*}
&(2(a-d)s+(b+c)(a-d))y+2(b-c)s+(b-c)(a+d)=0\\
\Leftrightarrow\quad &s=-\frac{(b+c)(a-d)y+(b-c)(a+d)}{2((a-d)y+(b-c))}.
\end{align*}
Finally, by eliminating $p$ from $ Y_1=0 $ and $Y_2=0$, we have
$$
(b-c)(a-d)y^2+2((a-c)(b-d)+(a-b)(c-d))y+(b-c)(a-d)=0,
$$
which is equivalent to $k_0y^2+2k_1+k_0=0$ where $k_0$ and $k_1$ are as in the lemma. The latter part of our lemma follows from this.
\end{proof}

The M\"obius transformation of the former lemma is a very useful mapping as it can be used for symmetrization. Namely, with this transformation, we can map the arbitrary quadruple of points $a,b,c,d$ onto the points $-1,y,-y,1$ that are clearly symmetric with respect to the origin. However, as the origin is not preserved under the mapping $h$, this raises the question about the location of the M\"obius center $h^{-1}(0)$ in terms of $a,b,c,d$. We can now prove Theorem \ref{thm_mobcenter} from Introduction. 

\begin{nonsec}{\bf Proof of Theorem \ref{thm_mobcenter}}
\begin{proof}
The mapping $h$ is as in Lemma \ref{lem_symth}. If $a,b,c,d$ are collinear and $|a-b|=|c-d|$, then it is a linear map and $h^{-1}(0)$ is the midpoint of $a$ and $d$. Consider then the other case. Denote $w=h^{-1}(0)$. Now, for $p,q,s$ as in Lemma \ref{lem_symth},
\begin{align*}
\frac{pw+q}{w+s}=0\quad\Leftrightarrow\quad
w=-\frac{q}{p}=\frac{(b-c)(a+d)y+(b+c)(a-d)}{2((b-c)y+a-d)}.
\end{align*}
\end{proof}
\end{nonsec}

\section{Three normalizations of point pairs}\label{go}
Mathematical proofs involving distances are often simplified if the configurations studied
exhibit some kind of symmetry or special structure. We study here, for a given pair of
distinct points $a,b $ in $\B^2\setminus\{0\}$ , three natural normalizations:
\begin{enumerate}
\item[(1)] $a=-b,$
\item[(2)] $a,b,0$ are collinear,
\item[(3)] $|a|=|b|\,.$
\end{enumerate}
If the points are in general position, then they can be mapped onto each of the
above special positions by means of a M\"obius transformation. The cost is that
the geometry is changed in each case: we measure the cost in terms of the
Lipschitz constant of the M\"obius transformation.

A simple way to achieve the normalization (2) above is to relabel the points $a$ and $b$ such that 
$|a|\le |b|$ and to use the mapping $T_a.$ By Theorem \ref{LIP} the cost of normalization can be
immediately obtained. In our discussion below, the plan is to carry out the normalization with a smaller
cost, if possible.

\begin{thm} \label{3norm} Consider  a given pair of
distinct points $a,b$ in $\B^2\setminus\{0\}$. Let $k$ be the point on the hyperbolic line $J^*[a,b]$ that is closest to the origin. In other words, $k=0$ if $a,b,0$ are collinear and, if $a,b,0$ are not collinear, then
$$
k=\frac{o_{ab}}{t}(t-\sqrt{t^2-1}),\quad t=|o_{ab}|,
$$
where let $o_{ab}$ be the center of the circle through $a$ and $b$, orthogonal to the unit circle. Let $h_{ab}$ be the hyperbolic midpoint of $a$ and $b$. Then there are  M\"obius transformations
$m_j:\B^2\to\B^2, j=1,2,3$ with the following properties: $m_1(a)= -m_1(b)$ with
\begin{equation} \label{norm1}
Lip(m_1)\le Lip(T_{h_{ab}})=\frac{1+|h_{ab}|}{1-|h_{ab}|},
\end{equation} 
$m_2(a), m_2(b),0$ are collinear and
\begin{equation} \label{norm2}
Lip(m_2)\le  Lip(T_k)=\frac{1+|k|}{1-|k|}, \quad
\end{equation} 
and 
$|m_3(a)|=| m_3(b)|$ and 
\begin{equation}
Lip(m_3)\le Lip(T_{h_{ab},k}), 
\end{equation} 
where $T_{h_{ab},k}$ is the mapping defined in \eqref{myTab}.
\end{thm}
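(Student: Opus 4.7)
The plan is to construct each $m_j$ explicitly from the M\"obius self-maps already at hand and verify both the normalization and the Lipschitz estimate by combining the hyperbolic isometry property of these maps with Theorem~\ref{LIP}.

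For (1), take $m_1=T_{h_{ab}}$. This is a hyperbolic isometry with $m_1(h_{ab})=0$, so $\rho_{\B^2}(m_1(a),0)=\rho_{\B^2}(a,h_{ab})=\rho_{\B^2}(b,h_{ab})=\rho_{\B^2}(m_1(b),0)$, whence $|m_1(a)|=|m_1(b)|$. Since $h_{ab}\in J^*[a,b]$, the image $m_1(J^*[a,b])$ is a hyperbolic geodesic through $0$, i.e., a Euclidean diameter. These two facts together force $m_1(a)=-m_1(b)$, and \eqref{myLip2} gives the Lipschitz bound. For (2), take $m_2=T_k$. Since $k\in J^*[a,b]$, the same reasoning shows that $m_2(J^*[a,b])$ is a diameter, so $m_2(a),m_2(b),0$ are collinear; the Lipschitz bound again follows from \eqref{myLip2}.

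For (3), take $m_3=T_{h_{ab},k}$, which is defined from \eqref{myTab} whenever the configuration is non-degenerate (i.e.\ $a,b,0$ are not collinear and $|a|\ne|b|$; the remaining cases are either trivial or handled by $m_3=T_{h_{ab}}$). The Lipschitz estimate is immediate from \eqref{LipTab}; the content is to prove $|m_3(a)|=|m_3(b)|$. The plan is to show that $m_3$ preserves the geodesic $J^*[a,b]$ setwise and then exploit its Euclidean axis of symmetry. Using the factorization $m_3=R\circ I$ recorded after Theorem~\ref{LIP}, where $R$ is reflection in $L(0,k)$ and $I$ is inversion in the circle $S^1(c,r)$ orthogonal to $S^1$ with $c={\rm LIS}[h_{ab},k,h_{ab}^*,k^*]$, the identity $R(k)=k$ together with $m_3(h_{ab})=k$ forces $I(h_{ab})=k$, and so $I(k)=h_{ab}$ by involutivity. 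Hence $I(J^*[a,b])$ is a circular arc orthogonal to $S^1$ inside $\B^2$ through $h_{ab}$ and $k$, which must coincide with $J^*[a,b]$ by uniqueness. Since $k$ is the Euclidean point of $J^*[a,b]$ closest to $0$, the line $L(0,k)=L(0,o_{ab})$ is a Euclidean axis of symmetry of the circle carrying $J^*[a,b]$, so $R$ also preserves $J^*[a,b]$. Consequently $m_3(a),m_3(b)\in J^*[a,b]$; being hyperbolically equidistant from $k$ and distinct, they lie on opposite sides of $k$ along the geodesic. The Euclidean reflection $R$ swaps such pairs and fixes $0$, hence $|m_3(a)|=|m_3(b)|$.

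The main obstacle is justifying the setwise invariance of $J^*[a,b]$ under $m_3$, which rests crucially on the position of $k$ as the Euclidean-closest point of $J^*[a,b]$ to $0$ and on the orthogonality of $S^1(c,r)$ to $S^1$. Everything else amounts to direct applications of the hyperbolic isometry property and Theorem~\ref{LIP}.
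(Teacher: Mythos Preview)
Your proposal is correct and uses exactly the same three mappings $m_1=T_{h_{ab}}$, $m_2=T_k$, $m_3=T_{h_{ab},k}$ that the paper chooses; the paper's own proof consists of a single sentence naming these maps and observing that the Lipschitz inequalities then hold with equality. Your write-up goes further by actually verifying the normalization properties, in particular supplying a geometric argument for $|m_3(a)|=|m_3(b)|$ via the setwise invariance of $J^*[a,b]$ under the reflection--inversion factorization of $T_{h_{ab},k}$, which the paper leaves implicit.
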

\begin{proof}
We can choose for instance the mappings $m_1=T_{h_{ab}}$, $m_2=T_k$, and $m_3=T_{h_{ab},k}$ in which case the following inequalities of the Lipschitz constants $Lip(m_j)$ presented in the theorem hold with equality.
\end{proof}

For all $a,b\in\B^2\setminus\{0\}$, $|k|\leq|h_{ab}|$ where $h_{ab}$ and $k$ are as in Theorem \ref{3norm}. Namely, the hyperbolic point $h_{ab}$ trivially is on the hyperbolic line $J^*[a,b]$, but, unlike $k$, $h_{ab}$ is not necessarily the closest point to the origin on this line. It follows from the inequality $|k|\leq|h_{ab}|$ that the Lipschitz constants $Lip(T_{h_{ab}})$ and $Lip(T_k)$ in Theorem \ref{3norm} satisfy
\begin{align}
Lip(T_k)\leq Lip(T_{h_{ab}}) 
\end{align}
for all $a,b\in\B^2$. However, a similar inequality cannot be presented for the Lipschitz constant $Lip(T_{h_{ab},k})$. Namely, for $a=0.3$ and $b=0.1i$,
\begin{align*}
Lip(T_{h_{ab}})\approx1.379,\quad
Lip(T_k)\approx1.207,\quad
Lip(T_{h_{ab},k})\approx1.301,    
\end{align*}
but, for $a=0.3$ and $b=0.1i$,
\begin{align*}
Lip(T_{h_{ab}})\approx1.508,\quad
Lip(T_k)\approx1.508,\quad
Lip(T_{h_{ab},k})\approx2.029.    
\end{align*}
In other words, $Lip(T_{h_{ab},k})$ is sometimes less than the other two Lipschitz constants and sometimes greater than them.

\begin{figure}
    \centering
    \begin{tikzpicture}[scale=2.1]
    \draw (0,0) circle (0.03cm);
    \draw (0,-0.9) circle (0.03cm);
    \draw (0.5,-0.3) circle (0.03cm);
    \draw (0.296,-0.405) circle (0.03cm);
    \draw (0.736,-1.005) circle (0.03cm);
    \draw (0,0) -- (0.736,-1.005);
    \draw (0.955,-0.294) arc (72.8:179.5:0.744);
    \draw (0,0) circle (1cm);
    \node[scale=1.3] at (0,0.15) {$0$};
    \node[scale=1.3] at (0.13,-0.85) {$a$};
    \node[scale=1.3] at (0.47,-0.15) {$b$};
    \node[scale=1.3] at (0.1,-0.35) {$k$};
    \node[scale=1.3] at (0.97,-1) {$o_{ab}$};
    \draw (3,0) circle (0.03cm);
    \draw (2.325,-0.494) circle (0.03cm);
    \draw (3.249,0.183) circle (0.03cm);
    \draw (2.194,-0.590) -- (3.806,0.590);
    \draw (3,0) circle (1cm);
    \node[scale=1.3] at (2.97,0.17) {$0$};
    \node[scale=1.3] at (2.7,-0.5) {$T_k(a)$};
    \node[scale=1.3] at (3.6,0.15) {$T_k(b)$};
    \end{tikzpicture}
    \caption{The M\"obius transformation $T_k$ maps the nearest point $k$ on the hyperbolic line $J^*[a,b]$
    through $a=-0.9i$ and $b=0.5-0.3i$ to the origin so that the images of $a$ and $b$ are collinear with the origin.}
    \label{fig2}
\end{figure}
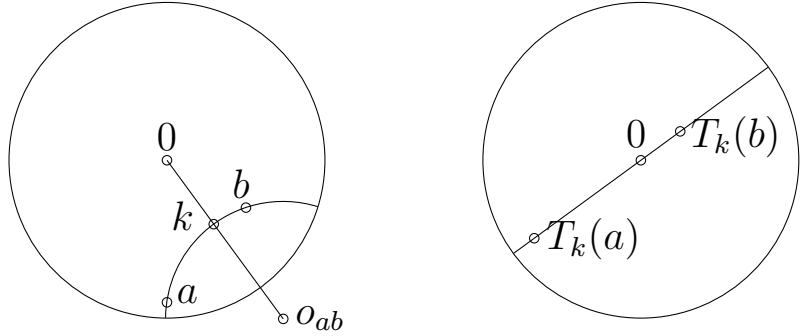

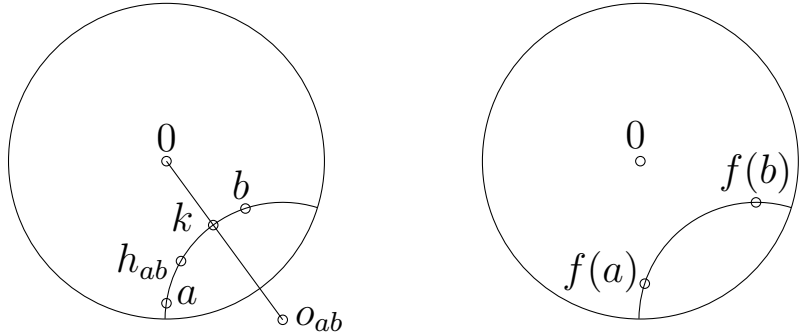
\begin{figure}
    \centering
    \centering
    \begin{tikzpicture}[scale=2.1]
    \draw (0,0) circle (0.03cm);
    \draw (0,-0.9) circle (0.03cm);
    \draw (0.5,-0.3) circle (0.03cm);
    \draw (0.296,-0.405) circle (0.03cm);
    \draw (0.092,-0.633) circle (0.03cm);
    \draw (0.736,-1.005) circle (0.03cm);
    \draw (0,0) -- (0.736,-1.005);
    \draw (0.955,-0.294) arc (72.8:179.5:0.744);
    \draw (0,0) circle (1cm);
    \node[scale=1.3] at (0,0.15) {$0$};
    \node[scale=1.3] at (0.13,-0.85) {$a$};
    \node[scale=1.3] at (0.47,-0.15) {$b$};
    \node[scale=1.3] at (0.1,-0.35) {$k$};
     \node[scale=1.3] at (-0.15,-0.633) {$h_{ab}$};
    \node[scale=1.3] at (0.97,-1) {$o_{ab}$};
    \draw (3,0) circle (0.03cm);
    \draw (3.028,-0.776) circle (0.03cm);
    \draw (3.732,-0.261) circle (0.03cm);
    \draw (3.955,-0.294) arc (72.8:179.5:0.744);
    \draw (3,0) circle (1cm);
    \node[scale=1.3] at (2.97,0.17) {$0$};
    \node[scale=1.3] at (2.75,-0.7) {$f(a)$};
    \node[scale=1.3] at (3.73,-0.07) {$f(b)$};
    \end{tikzpicture}
    \caption{The M\"obius transformation $f=T_{h_{ab},k}$ maps the hyperbolic midpoint $h_{ab}$ of $a$ and $b$ to the nearest point $k$ on the hyperbolic line $J^*[a,b]$
    through $a=-0.9i$ and $b=0.5-0.3i$ so that the images of $a$ and $b$ are at the same distance from the origin.}
    \label{fig3}
\end{figure}

\section{Quadruples of points on the unit circle}

Consider four distinct points $a,b,c,d$ on the unit circle in the positive order. We can symmetrize the quadruple in two different
ways using a M\"obius transformation of the unit disk:
\begin{enumerate}
\item[(1)] After the symmetrization, the new quadruple is symmetric with respect to a diameter of the unit disk.
\item[(2)] After the symmetrization, the points of the new quadruple form the vertices of a rectangle.
\end{enumerate}
We will next show how to construct the corresponding M\"obius transformations and estimate the cost of symmetrization, the
Lipschitz constant of the transformation.

\begin{thm}\label{myoarcisec} \cite{wvz}
(1) Let
$$w_1={\rm LIS}[a,b,c,d]\,,w_2={\rm LIS}[a,c,b,d]\,,w_3={\rm LIS}[a,d,b,c]\,.$$
Then the point $w_2$ is the orthocenter of the triangle with vertices at the points $0$, $w_1$, $w_3.$

(2)
The point of intersection of the hyperbolic lines $J^*[a,c]$ and $J^*[b,d]$ is given by
\begin{equation*}\label{myoaisec}
w_4= \frac{(ac-bd)\pm\sqrt{(a-b)(b-c)(c-d)(d-a)}}{a-b+c-d}\,,
\end{equation*}
where the sign "$+$" or "$-$" in front of the square root is chosen such that $|w_4|<1\,.$
 \end{thm}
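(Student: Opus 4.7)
For part (1) my plan is to invoke the classical projective-geometric fact that, for any quadrilateral inscribed in a conic, the \emph{diagonal triangle} formed by the three intersections of opposite sides is self-polar with respect to the conic. Specialized to our cyclic quadrilateral $abcd$ with conic the unit circle, this gives that each $w_i$ is the pole of the Euclidean line through the other two with respect to $S^1$. Since the pole of a line $\ell$ not through $0$ with respect to $S^1$ lies on the line through $0$ perpendicular to $\ell$, we immediately obtain the three orthogonality relations $L(0,w_i)\perp L(w_j,w_k)$ for each $\{i,j,k\}=\{1,2,3\}$. In the triangle with vertices $0,w_1,w_3$, the altitude from $0$ is therefore $L(0,w_2)$ and passes through $w_2$, while the altitude from $w_3$ is the line $L(w_2,w_3)$ (since the latter is perpendicular to $L(0,w_1)$) and also passes through $w_2$. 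Two altitudes through $w_2$ identify it as the orthocenter of the triangle.

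For part (2) I plan a direct analytic computation. The hyperbolic geodesic $J^*[a,c]$ is the intersection of $\B^2$ with the circle $C_{ac}$ through $a,c$ that meets $S^1$ orthogonally. If $O_{ac}$ and $r_{ac}$ denote its center and radius, then orthogonality gives $|O_{ac}|^2=r_{ac}^2+1$, while passing through $a$ and $c$ yields $\operatorname{Re}(\overline{O_{ac}}a)=\operatorname{Re}(\overline{O_{ac}}c)=1$. Using $|a|=|c|=1$ and solving this pair gives the compact formula $O_{ac}=2ac/(a+c)$, and the equation of $C_{ac}$ simplifies to
\[
(a+c)(|z|^2+1)=2z+2ac\,\overline z,
\]
with the analogous equation for $C_{bd}$.

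Viewing this pair as a linear system in the unknowns $|z|^2+1$ and $\overline z$ with $z$ as parameter, Cramer's rule expresses both in terms of $z$. Substituting into $|z|^2=z\overline z$ to eliminate $\overline z$ then produces a single quadratic
\[
(a+c-b-d)\,z^2-2(ac-bd)\,z+\bigl(ac(b+d)-(a+c)bd\bigr)=0,
\]
whose roots, by the quadratic formula, give the stated $w_4=\bigl[(ac-bd)\pm\sqrt{D}\bigr]/(a-b+c-d)$. The main technical step is identifying the discriminant
\[
D=(ac-bd)^2-(a+c-b-d)\bigl(ac(b+d)-(a+c)bd\bigr)
\]
with the factored form $(a-b)(b-c)(c-d)(d-a)$; this is a purely algebraic polynomial identity of degree $4$ in $a,b,c,d$ which I would verify by direct expansion and term-by-term comparison. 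Finally, the sign of the square root is fixed by the constraint $|w_4|<1$: the two roots of the quadratic are interchanged by the inversion $z\mapsto 1/\overline z$ across $S^1$ (which preserves both $C_{ac}$ and $C_{bd}$), and exactly one of them lies inside the unit disk.
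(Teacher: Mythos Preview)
The paper does not supply its own proof of this theorem; it is simply quoted from \cite{wvz}. There is therefore nothing in the paper to compare your argument against.

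That said, your proposal is correct on its own terms. For part~(1), the self-polarity of the diagonal triangle of a complete quadrangle inscribed in a conic is classical, and your passage from the three pole--polar relations to the perpendicularity statements $L(0,w_i)\perp L(w_j,w_k)$ is exactly how the orthocenter conclusion follows: in the triangle $0\,w_1\,w_3$, the altitude from $0$ is $L(0,w_2)$ and the altitude from $w_3$ is $L(w_2,w_3)$, both through $w_2$. For part~(2), the center formula $O_{ac}=2ac/(a+c)$ and the circle equation $(a+c)(|z|^2+1)=2z+2ac\,\overline z$ are correct, the elimination yields precisely the quadratic $(a+c-b-d)z^2-2(ac-bd)z+\bigl(ac(b+d)-bd(a+c)\bigr)=0$, and the discriminant identity
\[
(ac-bd)^2-(a+c-b-d)\bigl(ac(b+d)-bd(a+c)\bigr)=(a-b)(b-c)(c-d)(d-a)
\]
is a genuine degree-four polynomial identity, verifiable by direct expansion. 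Your final remark that the two roots are exchanged by the inversion $z\mapsto 1/\overline z$ (which fixes both orthogonal circles $C_{ac}$ and $C_{bd}$) correctly pins down the sign choice giving $|w_4|<1$.
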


\begin{thm}\label{quadruples}
Let  $a,b,c,d$ be four distinct points on the unit circle in the positive order. Let $w_4$ be the point in Theorem \ref{myoarcisec}(2) and $w_5$ be the midpoint of the orthogonal arc with center at $w_1={\rm LIS}[a,b,c,d].$ There exists a M\"obius transformation $m_4:\B^2\to\B^2$ such that $m_4(a) , m_4(b), m_4(c),m_4(d)$
are symmetric with respect to a line through $0$ and
\begin{equation}\label{quad1}
Lip(m_4)= Lip(T_{w_4,w_5}).
\end{equation}
Moreover, if $m_5= T_{w_4},$ then $m_5(a) =- m_5(c), m_5(b)=-m_5(d).$
\end{thm}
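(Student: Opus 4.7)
I would treat the two assertions separately, starting with the second one since it is more direct.

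\emph{Second assertion.} The point $w_4$ lies on both hyperbolic lines $J^*[a,c]$ and $J^*[b,d]$ by Theorem \ref{myoarcisec}(2). Since $m_5 = T_{w_4}$ is a M\"obius self-map of $\mathbb{B}^2$ sending $w_4$ to $0$ and preserving $S^1$, it maps each of these hyperbolic lines to a hyperbolic line through the origin, i.e.\ to a diameter of $\mathbb{B}^2$. The images $m_5(a), m_5(c) \in S^1$ therefore lie on a common diameter and are distinct, hence antipodal, giving $m_5(a) = -m_5(c)$; the analogous argument applied to $J^*[b,d]$ yields $m_5(b) = -m_5(d)$.

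\emph{First assertion.} I would take $m_4 := T_{w_4, w_5}$ as defined in \eqref{myTab}; the Lipschitz identity $\mathrm{Lip}(m_4) = \mathrm{Lip}(T_{w_4, w_5})$ is then immediate from Theorem \ref{LIP}. What remains is the symmetry claim. For this I would use the decomposition $T_{w_4, w_5} = p_{w_5} \circ \iota$ recorded just after \eqref{myTab}, where $p_{w_5}$ is Euclidean reflection across the diameter $L(0, w_5)$ and $\iota$ is inversion in the circle orthogonal to $S^1$ centered at $\mathrm{LIS}[w_4, w_5, w_4^*, w_5^*]$. Because $p_{w_5}$ is itself reflection in a line through the origin, the desired symmetry of $m_4(\{a,b,c,d\})$ about $L(0, w_5)$ is equivalent to the symmetry of $\iota(\{a,b,c,d\})$ about the same line.

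The key facts I would combine to verify this are: (i) $\iota(w_4) = w_5$ holds by the defining property of $T_{w_4,w_5}$, so the hyperbolic lines $J^*[a,c]$ and $J^*[b,d]$, which both pass through $w_4$, are carried by $\iota$ to hyperbolic lines through $w_5$; and (ii) $w_5$ lies on $L(0, w_1)$ with $w_1 = \mathrm{LIS}[a,b,c,d]$, by its definition as the midpoint of the orthogonal arc centered at $w_1$. Together these should force $\iota$ to carry the two-chord cross structure attached to $\{a,b,c,d\}$ onto a configuration whose Euclidean axis of symmetry is $L(0, w_5)$, so that after the final reflection $p_{w_5}$ the required pairing $m_4(a) \leftrightarrow m_4(d)$, $m_4(b) \leftrightarrow m_4(c)$ is realized. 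The main obstacle will be the concrete bookkeeping for this last step: one must verify, either by direct coordinate computation using Proposition \ref{LISprop} or by an invariance argument reducing to an already symmetric quadruple via Theorem \ref{myoarcisec}(1), that the symmetric partners are indeed matched as claimed. Should the axis turn out to be a different diameter, one adjusts $m_4$ by post-composition with a Euclidean rotation of $\mathbb{B}^2$, which does not change the Lipschitz constant.
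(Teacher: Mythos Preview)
The paper states Theorem \ref{quadruples} without proof, so there is nothing to compare against; what follows is an assessment of your argument on its own terms.

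Your proof of the second assertion is correct and complete: $T_{w_4}$ sends the two hyperbolic diagonals $J^*[a,c]$, $J^*[b,d]$ to diameters, forcing the antipodal relations.

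For the first assertion the choice $m_4=T_{w_4,w_5}$ is right and makes \eqref{quad1} immediate, but the symmetry argument has a genuine gap. Facts (i) and (ii) as you state them do not force the image cross to be symmetric about $L(0,w_5)$: two hyperbolic lines can pass through $w_5$ and still make unequal angles with that diameter. The missing ingredient is the \emph{second} hyperbolic axis of symmetry of the quadruple. Let $\gamma_1$ be the orthogonal arc with centre $w_1$ (so $w_5$ is its midpoint) and let $\gamma_3$ be the orthogonal arc with centre $w_3=\mathrm{LIS}[a,d,b,c]$. Inversion in the circle carrying $\gamma_3$ swaps $a\leftrightarrow d$ and $b\leftrightarrow c$, hence swaps $J^*[a,c]\leftrightarrow J^*[b,d]$, so $w_4\in\gamma_3$; the same reasoning with $\gamma_1$ gives $w_4\in\gamma_1$, and conjugating by $T_{w_4}$ (your second assertion) shows $\gamma_1\perp\gamma_3$ at $w_4$. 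Next, your inversion $\iota$ preserves $\gamma_1$: it swaps $w_4\leftrightarrow w_5$ and, commuting with $z\mapsto z^*$, also $w_4^*\leftrightarrow w_5^*$, and these four points determine the circle carrying $\gamma_1$. Consequently $\iota(\gamma_3)$ is the hyperbolic line through $w_5$ perpendicular to $\gamma_1$; since $w_5$ is the foot of the perpendicular from $0$ to $\gamma_1$, this line is exactly the diameter $L(0,w_5)$. Conjugating the reflection across $\gamma_3$ by $\iota$ therefore gives Euclidean reflection across $L(0,w_5)$, which swaps $\iota(a)\leftrightarrow\iota(d)$ and $\iota(b)\leftrightarrow\iota(c)$; applying $p_{w_5}$ preserves this symmetry. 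This closes the gap cleanly, with no coordinate computation and no need for the rotational escape hatch you mention at the end.
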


For a given point $w_1\in\R^2\setminus\overline{\B}^2$, a circle $S^1(r,w_1)$, $r>0$, is orthogonal to the unit circle if and only if its radius $r$ is $\sqrt{|w_1|^2-1}$. Consequently, the point $w_5$ in Theorem \ref{quadruples} has the formula
\begin{align}
w_5=\frac{w_1}{|w_1|}(|w_1|-r),\quad\sqrt{|w_1|^2-1}.     
\end{align}
The intersection point $w_1={\rm LIS}[a,b,c,d]$ can be found with Proposition \ref{prop_conjIntF}.

\begin{figure}
    \centering
    \begin{tikzpicture}[scale=3]
    \draw (0,0) circle (0.03cm);
    \draw (1,0) circle (0.03cm);
    \draw (0.955,0.295) circle (0.03cm);
    \draw (0.070,0.997) circle (0.03cm);
    \draw (-0.504,0.863) circle (0.03cm);
    \draw (0.822,1.172) circle (0.03cm);
    \draw (0.354,0.260) circle (0.03cm);
    \draw (0.233,0.332) circle (0.03cm);
    \draw (0,0) -- (0.822,1.172);
    \draw (1,0) -- (0.822,1.172);
    \draw (-0.504,0.863) -- (0.822,1.172);
    \draw (0,0) circle (1cm);
    \draw (-0.185,0.982) arc (190.6:279.2:1.025);
    \draw (0.070,0.997) arc (175.9:270:0.931);
    \draw (-0.504,0.863) arc (210.3:287.1:1.260);
    \node[scale=1.3] at (-0.07,0.1) {$0$};
    \node[scale=1.3] at (1.1,0) {$a$};
    \node[scale=1.3] at (1.05,0.295) {$b$};
    \node[scale=1.3] at (0.070,1.1) {$c$};
    \node[scale=1.3] at (-0.504,1) {$d$};
    \node[scale=1.3] at (0.822,1.3) {$w_1$};
    \node[scale=1.3] at (0.354,0.13) {$w_4$};
    \node[scale=1.3] at (0.12,0.23) {$w_5$};
    \end{tikzpicture}
    \caption{The points $w_4$ and $w_5$ defining the M\"obius transformation $T_{w_4,w_5}$ of Theorem \ref{quadruples} when $a=1$, $b=e^{0.3i}$, $c=e^{1.5i}$, and $d=e^{2.1i}$.}
    \label{fig4}
\end{figure}
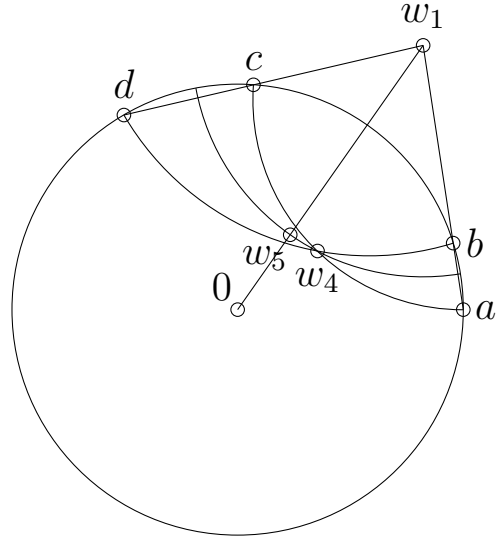

If $a=e^{1i}$, $b=e^{3i}$, $c=e^{4i}$, and $d=e^{5i}$, we have
\begin{align*}
Lip(T_{w_4,w_5})\approx1.496,\quad Lip(T_{w_4})\approx1.833,   
\end{align*}
and, if $a=e^{6i}$, $b=e^{6.1i}$, $c=e^{6.13i}$, and $d=e^{6.15i}$, we have
\begin{align*}
Lip(T_{w_4,w_5})\approx449.5,\quad Lip(T_{w_4})\approx60.00.   
\end{align*}
Consequently, neither of the mappings in Theorem \ref{quadruples} is always better than the other in terms of its Lipschitz constant. Computational tests suggest that $T_{w_4}$ produces a smaller Lipschitz constant if the points $a,b,c,d$ are close to each other, while $T_{w_4,w_5}$ is better when they are spread more evenly around the whole unit circle. This observation could be also concluded from the geometric meaning of the points $w_4$ and $w_5$.\\
\\

\noindent \textbf{Availability of data and materials.} Not applicable.\\
\textbf{Competing interests.} There are no competing interest.\\
\textbf{Funding.} O.R.'s research was funded by Magnus Ehrnrooth Foundation.\\
\textbf{Authors' contributions.} O.R. wrote the manuscript text, proved some of the results, and prepared all the figures, and M.V. proved rest of the results and supervised the project.\\
\textbf{Acknowledgements.} The authors are grateful to Professor Masayo Fujimura for her help in the proof of Lemma \ref{lem_symth}.\\
\textbf{Authors' information.}\newline
Oona Rainio$^1$, email: \texttt{ormrai@utu.fi}, ORCID: 0000-0002-7775-7656,\newline 
Matti Vuorinen$^1$, email: \texttt{vuorinen@utu.fi}, ORCID: 0000-0002-1734-8228\newline
1: University of Turku, FI-20014 Turku, Finland\\

\bibliographystyle{siamplain}

\end{document}